\begin{document}

\newtheorem{theorem}{Theorem}[section]
\newtheorem{lemma}[theorem]{Lemma}
\newtheorem{corollary}[theorem]{Corollary}
\newtheorem{fact}[theorem]{Fact}
\newtheorem{proposition}[theorem]{Proposition}
\newtheorem{claim}[theorem]{Claim}

\theoremstyle{definition}
\newtheorem{example}[theorem]{Example}
\newtheorem{remark}[theorem]{Remark}
\newtheorem{definition}[theorem]{Definition}
\newtheorem{question}[theorem]{Question}
\newtheorem{conjecture}[theorem]{Conjecture}

\def\ki{K\{X\}_{\operatorname{int}}}
\def\kzi{K[Z]_{\operatorname{int}}}
\def\kzhi{K\left[Z_i:i<h\right]_{\operatorname{int}}}
\def\k{K\{X\}}
\def\kz{K[Z]}
\def\kzh{K\left[Z_i:i<h\right]}
\def\krzhr{K_r\left[Z_i:i<h_r\right]}
\def\krzh{K_r\left[Z_i:i<h\right]}
\def\rank{\operatorname{rank}}
\def\int{\operatorname{int}}
\def\ord{\operatorname{ord}}

\title[Nonstandard methods for bounds in differential polynomial rings]{Nonstandard methods for bounds in\\ differential polynomial rings}
\author{Matthew Harrison-Trainor}
\author{Jack Klys}
\thanks{Matthew Harrison-Trainor and Jack Klys were partially supported by an NSERC USRA.}
\author{Rahim Moosa}
\thanks{Rahim Moosa was partially supported by an NSERC Discovery Grant}
\address{Department of Pure Mathematics\\
University of Waterloo\\
Waterloo, Ontario\\
Canada N2L 3G1}

\date{May 2nd, 2011.}

\subjclass[2000]{Primary 12H05. Secondary 03H05}

\begin{abstract}
Motivated by the problem of the existence of bounds on degrees and orders in checking primality of radical (partial) differential ideals,
the nonstandard methods of van den Dries and Schmidt [``Bounds in the theory of polynomial rings over fields. A nonstandard approach.", Inventionnes Mathematicae, 76:77--91, 1984] are here extended to differential polynomial rings over differential fields.
Among the standard consequences of this work are:  a partial answer to the primality problem, the equivalence of this problem with several others related to the Ritt problem,
and the existence of bounds for characteristic sets of minimal prime differential ideals and for the differential Nullstellensatz.
\end{abstract}

\maketitle

\section{Introduction}

\noindent
This paper is concerned with existence proofs of bounds in the theory of differential polynomial rings over differential fields.
We are motivated by the following open question in differential-algebraic geometry: Given a differential-algebraic family of Kolchin-closed sets $\{V_a\subseteq L^n:a\in L^m\}$, where $L$ is a differentially closed field of characteristic zero in several commuting derivations, is the set
$$D:=\{a\in L^m:V_a\text{ is irreducible}\}$$
Kolchin-constructible?
Because of quantifier-elimination for differentially closed fields, this is equivalent to asking whether $D$ can be defined by a first-order formula in the language of differential rings.
Now if $V_a$ is defined by the vanishing of the finite system $S_a$ of differential polynomials over $L$, then $V_a$ is irreducible if and only if the radical differential ideal generated by $S_a$, denoted by $\{S_a\}$, is prime.
Membership of $f_b$ in $\{S_a\}$, for a given differential polynomial with coefficients $b$, is a first-order property of $(a,b)$ because it is equivalent to the vanishing of $f_b$ on~$V_a$.
Primality of $\{S_a\}$, on the other hand, is not on the face of it first-order in $a$ because it requires quantification over all (coefficients of) differential polynomials $f$ and $g$ such that $fg\in\{S_a\}$.
If, however, we could restrict these quantifications to differential polynomials of bounded degree and order -- bounded independently of~$a$ -- then we would have a first-order definition for $D$.
In fact, one expects to have bounds that are also independent of the base differential field.

\begin{conjecture}[Definability of primality]
\label{primeconjecture}
For every $d$ there exists $r=r(d,n,m)$
such that for every field of characteristic zero equipped with $m$ commuting derivations $\Delta=\{\delta_1,\dots,\delta_m\}$ and every finite set $S$ of $\Delta$-polynomials in $n$ indeterminates $X=(X_1\dots,X_n)$ over $k$ of degree and order $\leq d$, the following are equivalent:
\begin{itemize}
\item[(i)]
$\{S\}$, the radical $\Delta$-ideal generated by $S$, is prime,
\item[(ii)]
 $\{S\}$ is proper and for all $\Delta$-polynomials $f$ and $g$ in $X$ over $k$ of degree and order $\leq r$, if $fg\in \{S\}$ then $f\in\{S\}$ or $g\in\{S\}$.
\end{itemize}
\end{conjecture}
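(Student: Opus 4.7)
The approach is a direct adaptation to the differential setting of the nonstandard ultraproduct method of van den Dries and Schmidt. Suppose, towards a contradiction, that the conjecture fails for some fixed $d, n, m$. Then for each $r \in \mathbb{N}$ there exist a characteristic-zero field $K_r$ equipped with $m$ commuting derivations and a finite set $S_r$ of $\Delta$-polynomials in $n$ indeterminates of degree and order at most $d$, such that $\{S_r\}$ is not prime yet (ii) holds with bound $r$. Fix a non-principal ultrafilter $\mathcal{U}$ on $\mathbb{N}$ and set $K := \prod_r K_r/\mathcal{U}$, a differential field of characteristic zero in $m$ commuting derivations. The sequence $(S_r)$ then determines a finite subset $S \subseteq \k$ of degree and order at most $d$.

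I would next invoke \L{}o\'s's theorem in two directions. Since (ii) with bound $s$ holds in $K_r$ for every $r \geq s$, we obtain internal cancellation at every standard level in $\ki$: for every $f, g \in \k \subseteq \ki$, if $fg \in \{S\}_{\int}$ then $f \in \{S\}_{\int}$ or $g \in \{S\}_{\int}$, where $\{S\}_{\int}$ denotes the internal radical $\Delta$-ideal generated by $S$ in $\ki$. In the opposite direction, non-primality of each $\{S_r\}$ transfers to internal non-primality of $\{S\}_{\int}$, witnessed by internal polynomials of necessarily nonstandard degree or order (else the standard-level cancellation just established would be contradicted).

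The crux of the argument is then to exploit the standard-level cancellation to conclude that the \emph{standard} radical $\Delta$-ideal $\{S\}$ in $\k$ is prime, and then to reverse the ultraproduct to deduce that $\{S_r\}$ is prime in $K_r\{X\}$ for $\mathcal{U}$-almost all $r$, yielding the desired contradiction. The first half requires establishing the descent equality $\{S\}_{\int} \cap \k = \{S\}$, after which the cancellation immediately upgrades $\{S\}$ to a prime ideal of $\k$. The reversal requires the complementary lift, namely that primality of $\{S\}$ in $\k$ forces internal primality of $\{S\}_{\int}$ in $\ki$; once this is in place, \L{}o\'s transfers primality of $\{S\}_{\int}$ back to primality of $\{S_r\}$ in $K_r\{X\}$ for a $\mathcal{U}$-large set of $r$.

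The principal obstacle is precisely this transfer of radical $\Delta$-ideals between $\k$ and $\ki$. In the polynomial case, van den Dries and Schmidt obtain the analogous descent and lift from faithful flatness of $k[X] \hookrightarrow k[X]_{\int}$, which they in turn extract from noetherianity and the classical effective degree bounds for ideal membership. Differential polynomial rings are not noetherian in any useful sense for this purpose, and no effective bound for membership in a radical $\Delta$-ideal is currently known -- this is essentially the Ritt problem. I therefore expect the plan to yield not a full proof of the conjecture but rather an equivalence, reducing the Definability of Primality Conjecture to the existence of uniform bounds on characteristic sets of minimal prime $\Delta$-ideals containing a given finite $S$, and thus to a long-standing open question in differential algebra, consistent with the partial answer advertised in the abstract.
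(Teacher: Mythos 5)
The statement you are addressing is an open conjecture, and you correctly recognise that a direct transplant of the van den Dries--Schmidt argument does not close it. Your analysis down to the identification of the bottleneck is sound and in fact matches the paper's: the descent $\{S\}_{\int}\cap\k=\{S\}$ is established unconditionally (Corollary~\ref{thm:Nonstandard_Nullstellensatz}); the cancellation you extract via \L{}o\'s at each standard level, combined with that descent, does upgrade $\{S\}$ to a prime $\Delta$-ideal of $\k$; and the unresolved step is exactly the complementary lift --- that primality of $\{S\}_{\int}\cap\k$ forces primality of $\{S\}_{\int}$ --- which is isolated as statement~(A) in Proposition~\ref{primeconjecture=int} and shown there to be equivalent to Conjecture~\ref{primeconjecture}.

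Your final sentence, however, mislocates the obstruction. You suggest the conjecture reduces to the existence of uniform bounds on characteristic sets of minimal prime $\Delta$-ideals containing a given finite $S$. But the paper proves that bound unconditionally (Theorem~\ref{pro:CharSetBound}), precisely via Lemma~\ref{minintchar} and the comparison of $\k$ with $\ki$, so it cannot be the missing ingredient; if it were, the conjecture would be a theorem. What the lift~(A) is actually shown equivalent to (Theorem~\ref{primeconjecture-equivalents}) is a different circle of bounds: on the degree and order of \emph{radical differential generators} of a prime $\Delta$-ideal given a bound on its characteristic set, and on testing containment between prime $\Delta$-ideals given their characteristic sets (the effective Ritt problem). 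Knowing a small characteristic set for $P$ does not, by any known argument, give a small radical $\Delta$-generating set for $P$; this is the genuine open problem. The partial consequence the paper does extract is Theorem~\ref{prop:Partial_Prime_Bound_Standard}: one of the two quantifiers over factors $f,g$ in the cancellation test can be bounded uniformly, via Proposition~\ref{prop:Partial_Prime_Bound}, whose proof uses Theorem~\ref{intchar=char} to show that cancellation against standard $f\in\k$ with $g$ ranging over all of $\ki$ already forces $\{S\}_{\int}$ to coincide with the prime internal $\Delta$-ideal $I_{\int}[\Lambda]$.
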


The purely algebraic analogue of Conjecture~\ref{primeconjecture}, which asks for a bound on degrees for checking the primality of radical ideals in polynomial rings over fields, has an affirmative solution, and this is why irreducibility in algebraic families of Zariski-closed sets is definable (in the language of rings).
Constructive proofs go back to the work of Hermann~\cite{hermann26} and later Seidenberg~\cite{seidenberg74}.
Clearer and more efficient proofs of the {\em existence} of bounds were given by van den Dries and Schmidt in~\cite{vandenDries} using a nonstandard approach; that is, by studying ultraproducts of polynomial rings over fields.
Our goal here is to extend the methods of van den Dries and Schmidt to the differential setting; to study the ultraproducts of differential polynomial rings over differential fields and to bring that to bear on Conjecture~\ref{primeconjecture}.

We obtain a partial solution whereby we are able to bound one of the two quantifiers in the definition of primality; see Theorem~\ref{prop:Partial_Prime_Bound_Standard} below.
We are also able to prove the equivalence of Conjecture~\ref{primeconjecture} with four other natural existence-of-bounds conjectures in differential algebra; see Theorem~\ref{primeconjecture-equivalents}.
Here we are motivated by~\cite{RittProbAlgorithms} where a related series of problems around the existence of {\em algorithms} in computational differential algebra are shown to be equivalent.
As a byproduct of our analysis we also obtain quick existence proofs of bounds in two other areas where bounds have been obtained by (or are deducible from) constructive methods; namely for characteristic sets of minimal prime differential ideals (Theorem~\ref{pro:CharSetBound}) and for the differential Nullstellensatz (Theorem~\ref{thm:Effective_Nullstellensatz}).

The results described above are proved in the final two sections of the paper, based on the study of {\em internal} differential polynomials carried out in Section~\ref{k-ki-sect}.
In Sections~\ref{revsect} and~\ref{intdelt} below we review the relevant notions from differential algebra and introduce the corresponding nonstandard setting.

We are grateful to Alexey Ovchinnikov for his explanations regarding several points of constructive differential algebra.

\bigskip
\section{Differential-algebraic preliminaries}
\label{revsect}

\noindent
We begin with a quick review of differential polynomial rings, primarily to set notation.
See~\cite{Kolchin} for details.

Suppose $m,n<\omega$, $\big(k,\Delta=\{\delta_1,\dots,\delta_m\}\big)$ is a differential field of characteristic zero in $m$ commuting derivations, and $X=(X_1,\dots,X_n)$ is an $n$-tuple of indeterminates.
Let $\Theta=\left\{ \delta_{1}^{e_{1}}\cdots\delta_{m}^{e_{m}}:e_{i}\geq0\right\} $
be the set of $\Delta$-operators and set $\Theta X:=\left\{ \theta X_{i}:\theta\in\Theta,1\leq i\leq n\right\}$.
The {\em ring of $\Delta$-polynomials in $X$ over $k$}, denoted by $k\{X\}$, is the ring of (commutative) polynomials in $\Theta X$ over $k$ equipped with the natural structure of a $\Delta$-ring.
For this reason the $X_i$ are called {\em differential indeterminates} while the elements of $\Theta X$ are the {\em algebraic indeterminates}.
For a subset $S\subset k\left\{ X\right\} $, we use $\left(S\right)$,
$\left[S\right]$, and $\left\{ S\right\} $ to denote, respectively,
the ideal, $\Delta$-ideal, and radical $\Delta$-ideal generated
by $S$.
By the {\em degree} of a $\Delta$-polynomial $f\in k\{X\}$ we will mean the total degree of $f$ as an element of $k[\Theta X]$, and by its {\em order} we mean the maximum order of the $\Delta$-operators that appear in $f$.
(The {\em order} of a $\Delta$-operator $\delta_{1}^{e_{1}}\cdots\delta_{m}^{e_{m}}$ is $e_1+\cdots+e_m$.)

We rank the algebraic indeterminates by 
$\displaystyle \delta_{1}^{e_{1}}\cdots\delta_{m}^{e_{m}}X_{i}<\delta_{1}^{f_{1}}\cdots\delta_{m}^{f_{m}}X_{j}$
if and only if
$\displaystyle \left(\sum_{l=1}^{m}e_{l},i,e_{1},\ldots,e_{m}\right)<\left(\sum_{l=1}^{m}f_{l},j,f_{1},\ldots,f_{m}\right)$
in the lexicographic ordering.
According to this ranking we enumerate $\Theta X$ as $\left(Z_h:h<\omega\right)$, and define the {\em height} of an algebraic indeterminate $v$ to be the $h$ such that $v=Z_h$. 
Given $f\in k\{X\}\setminus k$, the {\em leader} of $f$, denoted by $v_f$, is the highest ranking algebraic indeterminate appearing in $f$, and the {\em height} of $f$ is by definition the height of its leader.
The leading coefficient when $f$ is written as a polynomial in $v_f$ is called the {\em initial} of $f$ and is denoted by $I_f$.
Note that the leader of $I_f$ is strictly less than $v_f$ in rank.
The derivative of $f$ with respect to $v_f$ is called the {\em separant} of $f$ and will be denoted by $S_f$.
A consequence of these definitions is that for any $\theta\in\Theta\setminus\{1\}$, the initial of $\theta f$ is $S_f$.

The ranking of the algebraic indeterminates extends to $\Delta$-polynomials by the lexicographic ordering on the pair $\left(v_f, \deg_{v_f}(f)\right)$.
So for example both $\rank(I_f)<\rank(f)$ and $\rank(S_f)<\rank(f)$.
This ranking extends also to finite sequences of $\Delta$-polynomials by the following lexicographic-like ranking:
$\rank(f_1,\dots,f_r)<\rank(g_1,\dots,g_s)$ if either there is $k\leq\min(r,s)$ with $\rank(f_i)=\rank(g_i)$ for all $i<k$ and $\rank(f_k)<\rank(g_k)$, or $r>s$ and $\rank(f_i)=\rank(g_i)$ for all $i\leq s$.

Suppose $f,g\in k\{X\}$.
Recall that $f$ is {\em reduced with respect to} $g$ if $\theta v_g$ does not appear in $f$ for any $\theta\in\Theta\setminus\{1\}$ and the degree of $v_g$ in $f$ is strictly less than its degree in $g$.
In particular, if $\rank(f)<\rank(g)$ then $f$ is reduced with respect to $g$, but the converse need not hold; even when $f$ is reduced with respect to $g$ higher ranking algebraic indeterminates than $v_g$ may appear in $f$, just not any that can be obtained from $v_g$ by applying the derivations.
A set of $\Delta$-polynomials is called {\em autoreduced} if every element is reduced with respect to all the others.
Autoreduced sets are always finite, 
and we canonically view them as finite sequences by ordering the elements of an autoreduced set according to increasing height.
This induces a ranking on the autoreduced subsets of $k\left\{X\right\}$.

Given a subset $S\subseteq k\{X\}$, a {\em characteristic set} for $S$ is a lowest ranking autoreduced subset of $S$.
It is a basic fact that every $\Delta$-ideal has a characteristic set.
Moreover, prime $\Delta$-ideals are determined by their characteristic sets in the following sense: if $P$ is a prime $\Delta$-ideal and $\Lambda$ is a characteristic set for $P$ then 
$$P=I[\Lambda]:=\{g\in k\{X\}:H_\Lambda^tg\in [\Lambda], \text{ for some } t\in\mathbb N\}$$
where by definition $\displaystyle H_{\Lambda}:=\prod_{f\in\Lambda} S_fI_f$.
For notational reasons specific to this paper we are here denoting by $I[\Lambda]$ what is usually denoted in the literature by $[\Lambda]:H_{\Lambda}^\infty$.

Finally let us recall the characterisation of characteristic sets of prime $\Delta$-ideals.
First, given a finite set $\Lambda\subset\left\{X\right\}$ and $h<\omega$, $(\Lambda)_h$ denotes the ideal generated by $\Lambda$ together with all the derivatives of $\Lambda$ that are of height at most $h$.
An autoreduced set $\Lambda$ is {\em coherent} if whenever $f\neq g$ in $\Lambda$ are such that $\theta_fv_f=\theta_gv_g=Z_h$ for some $\theta_f,\theta_g\in\Theta$, and if $\theta_f,\theta_g$ are such that the height $h$ is minimal possible, then $S_g\theta_ff-S_f\theta_gg\in\left(\Lambda\right)_{h-1}$.

\begin{fact}[Lemma 2 of $\S$IV.9 of Kolchin~\cite{Kolchin}]
\label{thm:Kolchin}
Let $\Lambda$ be a finite subset of $k\left\{ X\right\}$.
Then $\Lambda$ is a characteristic set of a prime $\Delta$-ideal if and only if
\begin{itemize}
\item[(1)] $\Lambda$ is coherent, and
\item[(2)] $I(\Lambda):=\left\{g\in k\left\{X\right\}:H_\Lambda^tg\in (\Lambda),\text{ for some }t\in\mathbb N\right\}$ is a prime ideal containing no nonzero elements that are reduced with respect to $\Lambda$.
\end{itemize}
\end{fact}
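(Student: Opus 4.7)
The plan is to sketch both directions of Kolchin's characterization, with the sufficiency direction being substantially the harder one.

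\textbf{Necessity.} Assume $\Lambda$ is a characteristic set of a prime $\Delta$-ideal $P$, so $P=I[\Lambda]$. For coherence, take $f\neq g$ in $\Lambda$ with $\theta_fv_f=\theta_gv_g=Z_h$ at minimal height $h$. Because $\Lambda$ is autoreduced neither $\theta_f$ nor $\theta_g$ is $1$, so $\theta_ff$ and $\theta_gg$ are each linear in $Z_h$ with coefficients $S_f$ and $S_g$ respectively; hence $q:=S_g\theta_ff-S_f\theta_gg$ lies in $P$ and has height $\leq h-1$. Applying the standard reduction algorithm to $q$ using elements of $\Lambda$ and their derivatives of height at most $h-1$ produces $H_\Lambda^Nq\equiv r\pmod{(\Lambda)_{h-1}}$ with $r$ reduced with respect to $\Lambda$; since $r\in P$ and $\Lambda$ is a lowest-ranking autoreduced subset of $P$, we get $r=0$, and the minimality of $(\theta_f,\theta_g)$ together with a careful accounting of where the $H_\Lambda^N$ is absorbed shows $q$ itself lies in $(\Lambda)_{h-1}$. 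Condition (2) is easier: $I(\Lambda)\subseteq I[\Lambda]=P$, so $I(\Lambda)$ is prime, and any nonzero element of $P$ reduced with respect to $\Lambda$ would contradict the minimality of $\Lambda$.

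\textbf{Sufficiency.} Assume (1) and (2) and take $P:=I[\Lambda]$; we show $P$ is prime with characteristic set $\Lambda$. The essential tool is \emph{Rosenfeld's lemma}: coherence of $\Lambda$ implies that every \emph{partially reduced} element of $I[\Lambda]$ (an element in which no proper derivative of any $v_f$ appears) already lies in $I(\Lambda)$. Granting this, given $fg\in I[\Lambda]$ one first partially reduces $f$ and $g$ modulo $\Lambda$, obtaining $\tilde f,\tilde g$ with $H_\Lambda^Mf\equiv\tilde f$ and $H_\Lambda^Mg\equiv\tilde g\pmod{[\Lambda]}$; then $\tilde f\tilde g$ is partially reduced and in $I[\Lambda]$, hence by Rosenfeld in $I(\Lambda)$, and primality of $I(\Lambda)$ puts $\tilde f$ or $\tilde g$, and therefore $f$ or $g$, into $I[\Lambda]$. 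That $\Lambda$ is a characteristic set of $I[\Lambda]$ follows because any nonzero element of $I[\Lambda]$ reduced (in particular partially reduced) with respect to $\Lambda$ would yield a nonzero reduced element of $I(\Lambda)$ via Rosenfeld, contradicting~(2).

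\textbf{Main obstacle.} The heart of the proof is Rosenfeld's lemma. One argues by induction on the maximal height appearing in a representation $H_\Lambda^tg=\sum c_i\theta_if_i$: whenever two summands $\theta_if_i$ and $\theta_jf_j$ share a common derivative-of-leader $Z_h$, coherence provides a rewrite $S_{f_j}\theta_if_i-S_{f_i}\theta_jf_j\in(\Lambda)_{h-1}$, which one uses to eliminate the $Z_h$ contribution and strictly decrease the induction parameter. Executing this elimination while keeping the powers of $H_\Lambda$ under control, and ensuring the final witness is genuinely partially reduced, is the delicate piece of bookkeeping that carries the whole argument.
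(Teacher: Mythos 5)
The paper presents this statement purely as a citation (Lemma~2 of \S IV.9 of Kolchin's book) and supplies no proof of its own, so there is no in-text argument to compare you against; I will assess your sketch on its own terms. Your outline is the standard Kolchin/Rosenfeld route and the overall architecture is correct: necessity of coherence by Ritt-reducing the cross-polynomial $q:=S_g\theta_f f - S_f\theta_g g$ and invoking minimality of $\Lambda$, and sufficiency by partially reducing $f$ and $g$ and transferring the primality question to $I(\Lambda)$ via Rosenfeld's lemma.

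That said, the two places you yourself flag as delicate are genuine gaps, not routine bookkeeping. In the necessity of coherence, the reduction argument as you set it up produces $H_\Lambda^N q\in(\Lambda)_{h-1}$, which is strictly weaker than the required $q\in(\Lambda)_{h-1}$: the ideal $(\Lambda)_{h-1}$ is not a priori saturated with respect to $H_\Lambda$, so ``careful accounting of where $H_\Lambda^N$ is absorbed'' is exactly the missing step, and it cannot be waved away by appealing to minimality of $(\theta_f,\theta_g)$ alone. (Some authors sidestep this by \emph{defining} coherence with a saturation, $q\in(\Lambda)_{h-1}:H_\Lambda^\infty$; but the paper's definition is the unsaturated one, so this direction genuinely requires an argument.) In the sufficiency direction, Rosenfeld's lemma, which you correctly identify as the engine of the whole argument, is stated but not proved, and the induction you gesture at in the final paragraph --- eliminating a common derived leader $Z_h$ using the coherence rewrite while controlling the powers of $H_\Lambda$ so that the conclusion lands in $I(\Lambda)$ rather than some larger saturation --- is precisely the mathematical content that would need to be written out. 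As a roadmap to Kolchin's proof your proposal is sound; as a proof it leaves the two hardest steps open.
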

\noindent
The main usefulness of this criterion rests in the fact that it makes reference only to the ideal $(\Lambda)$ and not the $\Delta$-ideal $[\Lambda]$.

\bigskip
\section{Internal $\Delta$-polynomials}
\label{intdelt}

\noindent
We will be using nonstandard methods just as they were used in~\cite{vandenDries}, a gentle introduction to which can be found in Section~3 of~\cite{NonstandardReference}.
Fix once and for all an index set $\mathbb{I}$ and a nonprincipal ultrafilter $\mathcal U$ on $\mathbb I$.
We work in the ultrapoduct with respect to $\mathcal U$ of whatever universe of algebraic objects we are interested in.
For any set $A$ we denote by $A^*$ its nonstandard interpretation,  namely $\displaystyle \prod_\mathcal{U} A$.
Given an element $a$ of the ultraproduct we will usually fix an $\mathbb I$-indexed sequence representing that element and use $a(r)$ to denote the $r$th co-ordinate of that representative.
We say that a condition $P$ is true of $a$ ``co-ordinatewise almost everywhere'' if the set of indices $r\in\mathbb I$ for which $P$ is true of $a(r)$ is a member of the ultrafilter.
This does not depend on the choice of representative for $a$.
Recall that a subset $S$ of an ultraproduct is {\em internal} if there exists a sequence of subsets $\big(S_r:r\in \mathbb I\big)$, called the {\em family of components} of $S$, such that $a\in S$ if and only if $a(r)\in S_r$ almost everywhere.
Note that every finite set is internal with components the set of co-ordinates, that is, $S_r=\{a(r):a\in S\}$ for all $r\in\mathbb I$.
We say that a condition $P$ is true of an internal set $S$ ``componentwise almost everywhere'' if the set of indices $r\in\mathbb I$ for which $P$ is true of $S_r$ is a member of the ultrafilter.
This too does not depend on the choice of family of components for $S$.

Suppose $\left\{ K_r:r\in\mathbb{I}\right\}$ is a sequence of $\Delta$-fields of characteristic zero.
Fixing differential indeterminates $X=(X_1,\dots,X_n)$, we have the $\Delta$-ring of {\em internal $\Delta$-polynomials over $K$}, namely $\displaystyle\prod_\mathcal{U}\left(K_r\{X\}\right)$, which, following~\cite{vandenDries} in the algebraic case, we denote by $\ki$ where $\displaystyle K:=\prod_\mathcal{U}K_r$.
Almost every differential algebraic notion has an internal analogue in $\ki$.
For example, given $f,g\in\ki$, we say that  {\em $f$ is of lower internal rank than} $g$ if it is of lower rank co-ordinatewise almost everywhere, that is, if $\big\{r\in\mathbb I:f(r)\text{ is of lower rank than }g(r)\big\}\in\mathcal U$.
Similarly, {\em $f$ is internally reduced with respect to $g$} if it is reduced with respect to $g$ co-ordinatewise almost everywhere.
An internal subset  $\Lambda\subset\ki$ is \emph{internally autoreduced} if it is componentwise autoreduced almost everywhere.
Note that unlike for standard autoreduced sets, internally autoreduced subsets of $\ki$ need not be finite (though they will be componentwise finite almost everywhere).
Nevertheless we can rank the internally autoreduced sets by declaring that $\rank\left(\Lambda\right)<\rank\left(\Gamma\right)$ if this is the case componentwise almost everywhere.
Similarly,
an {\em internally characteristic set} for an internal set $S$, is an internal subset $\Lambda\subset S$ such that $\Lambda_r$ is characteristic for $S_r$ for almost all $r$.
As an illustration of how these definitions play out, we prove the following straightforward equivalences.

\begin{lemma}
\label{sample}
Suppose $S,\Lambda\subset\ki$ are internal sets.
\begin{itemize}
\item[(i)]
$\Lambda$ is internally autoreduced if and only if every element of $\Lambda$ is internally reduced with respect to every other element of $\Lambda$.
\item[(ii)]
$\Lambda$ is an internally characteristic set for $S$ if and only if $\Lambda$ is a minimally ranked internally autoreduced subset of $S$.
\end{itemize}
\end{lemma}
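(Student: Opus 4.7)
The plan is to handle both parts by contrapositive: in each direction, one implication falls out immediately from unwinding the componentwise definitions, while the other requires using the ultrafilter together with choice to assemble a componentwise counterexample into a bona fide internal object.

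For part (i), the forward direction is routine: given $f\neq g$ in $\Lambda$, the sets $\{r:f(r)\neq g(r)\}$, $\{r:f(r),g(r)\in\Lambda_r\}$, and $\{r:\Lambda_r\text{ is autoreduced}\}$ all lie in $\mathcal U$, so on their intersection $f(r)$ is reduced with respect to $g(r)$, giving internal reducedness. For the converse I would argue contrapositively. Suppose $\Lambda$ is not internally autoreduced, so $E:=\{r:\Lambda_r\text{ is not autoreduced}\}\in\mathcal U$. For each $r\in E$, choose distinct $f_r,g_r\in\Lambda_r$ with $f_r$ not reduced with respect to $g_r$; off $E$ set $f_r=g_r$ to be any element of $\Lambda_r$. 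The resulting sequences define $f,g\in\ki$; by internality of $\Lambda$ both lie in $\Lambda$, and since they disagree on $E\in\mathcal U$ we have $f\neq g$. But $f$ fails to be internally reduced with respect to $g$, contradicting the hypothesis.

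For part (ii), the forward direction again unwinds definitions: if $\Lambda_r$ is characteristic for $S_r$ almost everywhere then $\Lambda_r$ is in particular autoreduced almost everywhere, so $\Lambda$ is internally autoreduced; and given any internally autoreduced $\Gamma\subset S$, on the $\mathcal U$-large set where $\Gamma_r$ is autoreduced in $S_r$ we have $\rank(\Lambda_r)\leq\rank(\Gamma_r)$, so $\rank(\Lambda)\leq\rank(\Gamma)$ internally. For the converse, again by contrapositive: if $\Lambda$ is minimally ranked internally autoreduced but not internally characteristic, then the set $E$ of indices where $\Lambda_r$ fails to be a lowest-ranking autoreduced subset of $S_r$ lies in $\mathcal U$. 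For each $r\in E$ choose an autoreduced $\Gamma_r\subset S_r$ with $\rank(\Gamma_r)<\rank(\Lambda_r)$, and for $r\notin E$ set $\Gamma_r=\Lambda_r$. The family $(\Gamma_r)$ defines an internal subset $\Gamma$ of $S$ which is internally autoreduced (using that $\Lambda$ is too) and strictly lower-ranked than $\Lambda$, contradicting minimality.

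The only real friction is the worry that internally autoreduced sets may be infinite in the ultraproduct while each $\Lambda_r$ is finite, which could a priori obstruct the pairwise formulation in (i); but because the failure of autoreducedness is already a componentwise statement about pairs $(f_r,g_r)$ of standard polynomials, the infiniteness of $\Lambda$ is harmless and the choice-plus-internality construction above goes through. No nonstandard saturation is needed, only the ultrafilter.
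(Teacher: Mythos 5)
Your proof is correct and follows essentially the same strategy as the paper's: unwinding componentwise definitions via the ultrafilter in the easy direction, and in the harder direction passing to the contrapositive, choosing witnesses coordinatewise, and assembling them into an internal object. The only cosmetic differences are your explicit handling of the indices outside the bad set $E$ (where the paper simply works almost everywhere) and a mild glossing-over of the need to first intersect $E$ with the $\mathcal U$-large sets where $\Lambda_r$ is autoreduced and contained in $S_r$ before choosing the lower-ranked $\Gamma_r$, a step the paper spells out by explicitly shrinking $V$.
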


\begin{proof}
For the left-to-right direction of~(i), suppose $V\in\mathcal U$ is such that $\Lambda_r$ is autoreduced in $K_r\left\{X\right\}$ for all $r\in V$.
Given $f\neq g$ in $\Lambda$, shrinking $V$ if necessary, we have that $f(r)\neq g(r)$ are in $\Lambda_r$ and hence are reduced with respect to each other.
It follows that $f$ and $g$ are internally reduced with respect to each other.
For the converse, suppose $W:=\{r\in\mathbb I:\Lambda_r\text{ is autoreduced}\}\notin\mathcal U$.
Then $V:=\mathbb I\setminus W\in \mathcal U$.
For each $r\in V$, let $f_r\neq g_r$ in $\Lambda_r$ be such that $f_r$ is not reduced with respect to $g_r$.
Let $f,g\in\ki$ be such that $f(r)=f_r$ and $g(r)=g_r$ for almost all $r$.
Then $f\neq g$ are in $\Lambda$ and $f$ is not internally reduced with respect to $g$.

Now suppose that $\Lambda$ is internally characteristic for $S$, and let $V\in\mathcal U$ be co-ordinates where $\Lambda_r$ is a characteristic subset of $S_r$.
In particular, $\Lambda$ is internally autoreduced.
Now if $\Sigma\subset S$ is any other internally autoreduced set, after possibly shrinking $V$, we have that $\Sigma_r\subset S_r$ is autoreduced and hence $\rank\left(\Lambda_r\right)\leq\rank\left(\Sigma_r\right)$, for all $r\in V$.
So $\rank\left(\Lambda\right)\leq\rank\left(\Sigma\right)$.
This proves that $\Lambda$ is minimally ranked among the internally autoreduced subset of $S$.
For the converse, suppose that $\Lambda\subset S$ is internally autoreduced but $W:=\{r\in\mathbb I:\Lambda_r\text{ is characteristic for }S_r\}\notin\mathcal U$.
Then $V:=\mathbb I\setminus W\in \mathcal U$.
Shrinking $V$ we may assume that $\Lambda_r$ is autoreduced for each $r\in V$.
Hence, for each $r\in V$ there must exist an autoreduced subset $\Sigma_r\subset S_r$ with $\rank\left(\Sigma_r\right)<\rank\left(\Lambda_r\right)$.
The sequence $\left(\Sigma_r:r\in V\right)$ extends to the family of components of an internal set $\Sigma\subset\ki$.
It follows that $\Sigma\subset S$ is internally autoreduced and $\rank\left(\Sigma\right)<\rank\left(\Lambda\right)$.
That is, $\Lambda$ is not minimally ranked among the internally autoreduced subsets of $S$.
This proves~(ii).
\end{proof}

By an {\em internal $\Delta$-ideal} of $\ki$ we mean an internal subset of $\ki$ almost all of whose components are $\Delta$-ideals.
Note that an internal $\Delta$-ideal is a $\Delta$-ideal of $\ki$.
Given an internal set $S\subset\ki$,  we use $\left(S\right)_{\int}$,
$\left[S\right]_{\int}$, and $\left\{ S\right\}_{\int} $ to denote the internal ideal whose $r$th component is, respectively,
the ideal, $\Delta$-ideal, and radical $\Delta$-ideal generated
by $S_r$.
On the other hand, it is not hard to check that $(S)_{\int}$ is contained in every ideal of $\ki$ containing $S$, $[S]_{\int}$ is contained in every internal $\Delta$-ideal of $\ki$ containing $S$, and $\left\{S\right\}_{\int}$ is contained in every radical internal $\Delta$-ideal of $\ki$ containing $S$.

It follows immediately from the definitions that since every $\Delta$-ideal has a characteristic subset, every internal $\Delta$-ideal has an internally characteristic subset.
Moreover, a prime internal $\Delta$-ideal is determined by an internally characteristic set in the following sense: if $P$ is a prime internal $\Delta$-ideal and $\Lambda\subset P$ is an internally characteristic set for $P$ then 
$$P=I_{\int}[\Lambda]:=\{g\in\ki:(H^{\int}_{\Lambda})^Ng\in [\Lambda]_{\operatorname{int}}, \text{ for some } N\in\mathbb N^*\}$$
where by definition $H^{\int}_{\Lambda}\in\ki$ is determined by the sequence $\left(H_{\Lambda_r}:r\in\mathbb I\right)$.
Indeed, this is just by transfer from the corresponding fact about standard prime $\Delta$-ideals.
Moreover, the characterisation of characteristic sets of prime $\Delta$-ideals (Fact~\ref{thm:Kolchin}) tranfers easily:
\begin{fact}
\label{thm:Kolchin:int}
An internal set $\Lambda\subset\ki$ is internally characteristic for a prime internal $\Delta$-ideal if and only if $\Lambda$ is internally coherent and 
$$I_{\int}(\Lambda):=\left\{g\in\ki:\left(H^{\int}_\Lambda\right)^Ng\in (\Lambda)_{\int},\text{ for some }N\in\mathbb N^*\right\}$$
is a prime internal ideal containing no nonzero elements that are internally reduced with respect to $\Lambda$.
\end{fact}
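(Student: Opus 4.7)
The plan is to reduce the statement to a componentwise application of Fact~\ref{thm:Kolchin}, exactly as the authors foreshadow with ``transfers easily''. Every notion appearing in the statement (internally characteristic, internally coherent, prime internal $\Delta$-ideal, internally reduced, $I_{\int}(\Lambda)$) has been set up in this section precisely so that it holds if and only if the corresponding standard notion holds in almost every component; once this is checked, the biconditional falls out of applying Fact~\ref{thm:Kolchin} in each $K_r\{X\}$ and intersecting the resulting ``almost everywhere'' sets.

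First I would verify the componentwise translations. For ``internally characteristic for a prime internal $\Delta$-ideal'', ``internally coherent'', and ``internally reduced with respect to $\Lambda$'' this is immediate by the same kind of ultrafilter argument already illustrated in Lemma~\ref{sample}: in each case one shows that either the defining standard property of $\Lambda_r$ holds for a set of indices in $\mathcal U$, or else the failure set lies in $\mathcal U$ and the componentwise witnesses of failure glue to an internal witness showing the internal analogue fails.

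The one translation that requires genuine content is the identification of $I_{\int}(\Lambda)$ with the internal subset of $\ki$ whose $r$th component is $I(\Lambda_r)$, because only then does ``$I_{\int}(\Lambda)$ is a prime internal ideal'' become ``$I(\Lambda_r)$ is prime almost everywhere''. The inclusion $\subseteq$ is immediate: if $(H^{\int}_\Lambda)^N g \in (\Lambda)_{\int}$ with $N=[N(r)]\in\mathbb N^*$, then $H_{\Lambda_r}^{N(r)} g(r) \in (\Lambda_r)$ for almost every $r$, so $g(r)\in I(\Lambda_r)$. Conversely, if $g(r)\in I(\Lambda_r)$ for all $r$ in some $V\in\mathcal U$, choose for each such $r$ an exponent $t(r)\in\mathbb N$ with $H_{\Lambda_r}^{t(r)} g(r)\in (\Lambda_r)$; then $N:=[t(r)]\in \mathbb N^*$ witnesses $g\in I_{\int}(\Lambda)$. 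This step is also the only place where allowing a nonstandard exponent $N\in\mathbb N^*$ (rather than only $t\in\mathbb N$) is essential, reflecting the nonuniformity of the standard exponent across coordinates.

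Given these translations, the proof concludes in one line: $\Lambda$ is internally characteristic for a prime internal $\Delta$-ideal iff $\Lambda_r$ is characteristic for a prime $\Delta$-ideal for almost every $r$ iff, by Fact~\ref{thm:Kolchin} applied in $K_r\{X\}$, $\Lambda_r$ is coherent and $I(\Lambda_r)$ is prime with no nonzero element reduced with respect to $\Lambda_r$, almost everywhere, iff $\Lambda$ is internally coherent and $I_{\int}(\Lambda)$ is a prime internal ideal containing no nonzero internally reduced element. The main obstacle is the bookkeeping for $I_{\int}(\Lambda)$ described in the previous paragraph; everything else is a routine application of \L{}o\'s's theorem of the type already rehearsed in Lemma~\ref{sample}.
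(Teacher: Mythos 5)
Your proposal is correct and is exactly the argument the paper intends by ``transfers easily'': every notion in Fact~\ref{thm:Kolchin:int} is defined (or is easily seen) to hold if and only if the corresponding standard notion holds componentwise almost everywhere, with the only nontrivial translation being the identification of $I_{\int}(\Lambda)$ with the internal set having components $I(\Lambda_r)$ -- which you handle correctly by gluing componentwise exponents $t(r)$ into a nonstandard $N\in\mathbb N^*$. Nothing to add.
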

\noindent
Of course, here {\em internally coherent} means coherent in almost all components.
%Note that $I_{\int}(\Lambda)$ can be equivalently described as the internal ideal with components $I(\Lambda_r):=\{g\in K_r\{X\}:H_{\Lambda_r}^ng\in(\Lambda_r)K_r\{X\}\text{ for some }n\in\mathbb N\}$.

\bigskip
\section{Comparing $\ki$ and $\k$}
\label{k-ki-sect}

\noindent
Since $K=\prod_\mathcal{U}K_r$ is equipped with the natural (co-ordinatewise) $\Delta$-field structure we also have the $\Delta$-polynomial ring $K\{X\}$.
There is a natural embedding of $\k$ into $\ki$ over $K$ which essentially ``unpacks'' the coefficients of the polynomial.
More precisely, if we list all of the monomials in $\Theta X$ as $M_{1},M_{2},\ldots$, then we can write an element of $\k$ as $\sum_{i=1}^{\infty}a_{i}M_{i}$ with the $a_{i}\in K$, all but finitely many of which are zero.
Then we can define $\phi:\k\to\ki$ by
\[\left[\phi\left(\sum_{i=1}^{\infty}a_{i}M_{i}\right)\right](r)=\sum_{i=1}^{\infty}a_{i}(r)M_{i}\]
It is easy to check
that $\phi$ is an embedding of $\Delta$-rings that is the identity on $K$.
This allows us to view $\ki$ as a $\Delta$-ring extension of $\k$.
Following the approach of~\cite{vandenDries}, we study the extent to which the properties of $\k$ are reflected in this extension.

We have the following intrinsic characterisation of $\k$: it is the subring of elements $f\in\ki$ for which for some $V\in\mathcal U$ there is a bound on the degree and order of $f(r)$ independently of $r\in V$.
Indeed, if $f$ has this property then there are finitely many monomials $M_1,\dots,M_N$ such that for all $r\in V$ there are $a_{i,r}\in K_r$ such that $f(r)=\sum_{i=1}^Na_{i,r}M_i$, and so $f$ is $\phi\left(\sum_{i=1}^Na_iM_i\right)$ where $a_i\in K$ is such that $a_i(r)=a_{i,r}$ for all $r\in V$.
Conversely, from the definition of $\phi$ it follows that if
$f\in\k$ then there is a set $V\in\mathcal U$ such that for
all $r\in V$, $\phi(f)(r)$ has the same ``shape'' as $f$,
ie. a monomial $M$ appears with a nonzero coefficient in $f$ if
and only if it does in $\phi(f)(r)$ as well.

Let us first consider $\ki$ purely as a ring extension of $\k$.
Recall that $Z=(Z_i:i<\omega)$ is an enumeration of the algebraic indeterminates according to height.
The underlying ring structure of $\k$ is simply the pure polynomial ring $K\left[Z\right]$, while that of $\ki$ is the ring of internal polynomials $K\left[Z\right]_{\int}:=\prod_{\mathcal U}K_r\left[Z\right]$.
In the following proposition we extend some of the results of van den Dries and Schmidt -- which were proved in~\cite{vandenDries} for finitely generated polynomial rings -- to this infinitary setting.

\begin{proposition}
\label{flatness}
The ring $\ki$ is a faithfully flat $\k$-module.
Moreover, for any ideal $I$ of $\k$,
\begin{itemize}
\item[(a)]
$I$ is prime in $\k$ if and only if $I\ki$ is prime in $\ki$, and
\item[(b)]
$\big\{g\in\k:g^N\in I\ki\text{ for some }N\in\mathbb N^*\big\} = \sqrt{I}$
\end{itemize}
\end{proposition}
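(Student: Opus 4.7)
The plan is to reduce all three assertions to the finitary results of van den Dries and Schmidt~\cite{vandenDries}, by exploiting the fact that every element of $\k$ has finite support in the algebraic indeterminates. For each $h<\omega$ I use the tower $\kzh \subseteq \kzhi \subseteq \ki$ together with the componentwise retraction $\pi_h \colon \ki \to \kzhi$ that sends $Z_i \mapsto 0$ for $i \geq h$; this is a ring homomorphism fixing $\kzhi$ pointwise. Every finitely generated ideal of $\k$, and every finitely presented $\k$-module, is defined over some such $\kzh$.

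For faithful flatness, I first reduce flatness of $\ki$ over $\k$ to flatness of $\ki$ over $\kzh$: if a finitely presented $\k$-module $M$ has the form $M_0 \otimes_{\kzh} \k$, then because $\k$ is free over $\kzh$ one has the change-of-rings identity $\operatorname{Tor}_{i}^{\k}(M,\ki) = \operatorname{Tor}_{i}^{\kzh}(M_0, \ki)$. Transitivity of flatness then further reduces flatness of $\ki$ over $\kzh$ to van den Dries and Schmidt's finitary flatness of $\kzhi$ over $\kzh$ combined with flatness of $\ki$ over $\kzhi$, the latter being forced componentwise by the freeness of $K_r[Z]$ over $\krzh$. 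For the faithful half it suffices, given flatness, to prove $I\ki \cap \k = I$ for every ideal $I$ of $\k$. This reduces to finitely generated $I$ by directed union; choosing $h$ large enough that both $r$ and the generators $f_i$ of $I$ lie in $\kzh$, and applying $\pi_h$ to a witness $r = \sum g_i f_i \in I\ki$, yields $r = \sum \pi_h(g_i)f_i \in I_0 \kzhi$ where $I_0 := I \cap \kzh$; van den Dries and Schmidt's finitary identity $I_0\kzhi \cap \kzh = I_0$ then gives $r \in I_0 \subseteq I$.

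For~(a), the reverse direction is immediate from faithful flatness, since $\k/I$ embeds in $\ki/I\ki$ and so primality of $I\ki$ forces $\k/I$ to be a domain. For the forward direction, write $I = \bigcup_h J_h$ with $J_h := I \cap \kzh$; each $J_h$ is prime in the Noetherian ring $\kzh$ (as the contraction of a prime), and hence finitely generated. Thus $J_h\ki$ is an internal ideal whose $r$th component is the extension along the polynomial inclusion $\krzh \hookrightarrow K_r[Z]$ of the prime ideal $(J_h\kzhi)_r$ (prime by van den Dries and Schmidt's finitary version of~(a) followed by {\L}o\'{s}), and extending a prime across a polynomial extension preserves primality. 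A second application of {\L}o\'{s}'s theorem lifts the componentwise primality to primality of $J_h\ki$ in $\ki$, and then $I\ki = \bigcup_h J_h\ki$, being an ascending union of primes, is prime.

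For~(b), the forward inclusion is immediate. For the reverse, if $g \in \k$ satisfies $g^N \in I\ki$ for some $N \in \mathbb{N}^*$, express $g^N = \sum_{i=1}^k h_i f_i$ as a standard-finite external sum with $h_i \in \ki$ and $f_i \in I$. Choose $h$ so that $g, f_1, \ldots, f_k \in \kzh$, and set $J := (f_1,\ldots,f_k)\kzh$. Applying $\pi_h$ yields $g^N = \sum \pi_h(h_i)f_i \in J\kzhi$, whereupon van den Dries and Schmidt's finitary version of~(b) gives $g \in \sqrt{J} \subseteq \sqrt{I}$. The main obstacle I anticipate is the flatness of $\ki$ over $\kzhi$: although $K_r[Z]$ is componentwise a free $\krzh$-module, the cardinality of the basis varies with $r$, so the ultraproduct is not literally free; verifying the equational criterion for flatness requires expanding elements of $\ki$ componentwise in the monomial basis in $Z_h, Z_{h+1}, \ldots$ and packaging the resulting finite-support data into coefficients that assemble into honest elements of $\kzhi$.
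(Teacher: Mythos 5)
Your arguments for faithfulness, for~(a), and for~(b) are correct and are essentially the same as the paper's, with one cosmetic difference: where the paper repeatedly invokes $I\kzhi\cap\kzh=I$ (faithful flatness of $\kzhi$ over $\kzh$) to descend from $\kzhi$ to $\kzh$, you do this descent by hand with the retraction $\pi_h$ . That is a clean way to present it. Your treatment of~(a) via the ascending union $I=\bigcup_h J_h$ is also fine and matches the paper's reduction to the finitely generated case.

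The flatness argument, however, has a genuine gap, and it is precisely the step you flag as ``the main obstacle''. You reduce via transitivity to \emph{flatness of $\ki$ over $\kzhi$}, and assert that this is ``forced componentwise by the freeness of $K_r[Z]$ over $\krzh$''. That inference is not valid: an ultraproduct of flat (even free) ring maps $\krzh\hookrightarrow K_r[Z]$ need not be flat over the ultraproduct of the bases. The equational criterion for flatness demands a \emph{standard-finite} family $c_{ij}\in\kzhi$, $d_j\in\ki$ realizing a given relation $\sum a_ib_i=0$; componentwise you can certainly write $b_i(r)=\sum_\mu b_{i,\mu}(r)\mu$ in the $Z_{\geq h}$-monomial basis and conclude that each coefficient vector is a syzygy of $\big(a_i(r)\big)$ in $\krzh$, but both the set of monomials occurring and the number of syzygy generators vary with $r$ without any a priori bound (the $a_i\in\kzhi$ may have unbounded degree across components). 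Your remedy of ``packaging the finite-support data into honest elements of $\kzhi$'' founders exactly here: the finite support has no uniform size. This is not an artifact of your exposition -- it is the substantive difficulty, and I do not see how to prove $\ki$ flat over $\kzhi$ directly.

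The paper avoids this entirely. In its flatness proof, the linear relation $FY=0$ has coefficients in $\k$, hence of \emph{standard bounded} degree and height; for each $r$ one first descends from $\krzhr$ to $\krzh$ using ordinary flatness of a finite-variable polynomial extension, and then invokes Theorem~1.4 of van den Dries--Schmidt to obtain a degree bound $\alpha$ on a generating set of solutions, \emph{uniform in $r$ because the degree of $F$ is bounded}. This uniform $\alpha$ yields a uniform dimension $N$ for the space of bounded-degree solutions, and only with this uniformity in hand can one package the componentwise generating solutions $e_{j,r}$ into finitely many elements $e_j\in\kzhi^\ell$. The availability of that degree bound is what makes the argument go through, and it is exactly what is missing from your componentwise claim.
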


\begin{proof}
As above, $\k=\kz$ and $\ki=\kzi$, where $Z=(Z_i:i<\omega)$ is an enumeration of the algebraic indeterminates according to height.

For flatness consider a homogeneous linear equation
$$f_1Y_1+\cdots f_\ell Y_\ell=0$$
where $f_1,\dots,f_\ell\in\kz$.
We denote this equation by $FY=0$.
Suppose $g\in \kzi^\ell$ is a solution.
We need to show that $g$ is a $\kzi$-linear combination of solutions in $\kz^\ell$.
We do this by passing first to $\kzhi$, where $h$ is such that $f_1,\dots,f_\ell\in\kzh$, and then to $\kzh$ by the finite variable case.
For each $r\in\mathbb I$, let $h_r\geq h$ be such that $g(r)\in\krzhr^\ell$.
Since, for almost all $r$, $g(r)$ is a solution to $F(r)Y=0$, and since $\krzhr$ is flat over $\krzh$, we have that $g(r)$ is a $\krzhr$-linear combination of solutions to $F(r)Y=0$ in $\krzh^\ell$.
Now, as the coefficients of the equation are of bounded degree independently of $r$, Theorem~1.4 of~\cite{vandenDries} gives us a bound $\alpha$, independent of $r$, such that all solutions in $\krzh^\ell$ are $\krzh$-linear combinations of solutions with degree bounded by $\alpha$.
Let $N$ be the $K_r$-dimension of the subspace of $\krzh^\ell$ made up of tuples of polynomials with degree bounded by $\alpha$.
Then $N$ depends only on $\alpha$ and $h$, and hence not on $r$.
Moreover, for each $r$, there exist solutions $e_{1,r},\dots,e_{N,r}\in\krzh^\ell$ such that $g(r)$ is a $\krzhr$-linear combination of $\{e_{1,r},\dots,e_{N,r}\}$.
Setting $e_j\in\kzhi^\ell$ to be such that $e_j(r)=e_{j,r}$ almost everywhere, we have that $g$ is a $\kzi$-linear combination of $\{e_1,\dots,e_N\}$.
Theorem~1.1 of~\cite{vandenDries} tells us that $\kzhi$ is a flat extension of $\kzh$, and so each $e_j$ is a $\kzhi$-linear combination of solutions in $\kzh^\ell$. 
Hence $g$ is a $\kzi$-linear combination of solutions in $\kzh^\ell\subset\kz^\ell$, as desired.

For faithful flatness, suppose $AY=B$ is a system of $k$ linear equations in $\ell$ unknowns over $K\left[Z\right]$ with a solution in $K\left[Z\right]_{\int}^\ell$.
Let $h$ be a bound on the heights of the entries of $A$ and $B$.
Then working co-ordinatewise, and using the fact that $K_r\left[Z\right]$ is faithfully flat over $K_r[Z_i:i<h]$, we have that $AY=B$ has a solution in $K\left[Z_i:i<h\right]_{\int}$.
But the latter is faithfully flat over $K\left[Z_i:i<h\right]$ by Theorem~1.8 of~\cite{vandenDries}.
Hence we get a solution in $K\left[Z\right]^\ell$, as desired.

The right-to-left direction of part~(a) of the ``moreover'' clause is an immediate consequences of the fact that by faithfull flatness $I\ki\cap\k=I$.
For the converse, note first of all that since checking any instance of primeness of $I\ki$ involves only finitely many elements from $I$, it would suffice to prove the result with $I\cap\kzh$, which is still prime, in place of $I$, for each $h<\omega$.
But $I\cap\kzh$ is finitely generated, and hence it will suffice to prove the result for finitely generated $I$.
That is, we assume  $(\Lambda)\k$ is prime for some finite subset $\Lambda\subset\k$, and prove that $(\Lambda)\ki$ is prime.
Let $h$ be such that $\Lambda\subset\kzh$.
Since $\kz$ is faithfully flat over $\kzh$,
$(\Lambda)\kz\cap\kzh=(\Lambda)\kzh$
and hence the latter is prime.
By Theorem~2.5 of~\cite{vandenDries}, $(\Lambda)\kzhi$ is prime.
Note that $(\Lambda)\kzhi$, being finitely generated, is an internal ideal with family of components $\big((\Lambda_r)\krzh:r\in\mathbb I\big)$ where $\Lambda_r:=\{f(r):f\in\Lambda\}$.
Hence, for almost all $r$, $(\Lambda_r)\krzh$ is prime.
But then, passing to a polynomial ring extension over the same field, we have that $(\Lambda_r)K_r[Z]$ is prime for almost all $r$.
It follows that the corresponding internal ideal $(\Lambda)\kzi$ is prime, as desired.

Finally, for~(b), suppose $g\in\k$ and $N\in\mathbb N^*$ are such that $g^N\in I\ki$.
Let $\Lambda$ be a finite subset of $I$ such that $g^N\in(\Lambda)\ki$.
We show that $g\in\sqrt{(\Lambda)}$.
Let $h$ be such that $\Lambda\subset\kzh$ and $g\in\kzh$.
Then, for almost all $r$, $g(r)^{N(r)}\in(\Lambda_r)K_r[Z]\cap\krzh=(\Lambda_r)\krzh$.
That is, $g\in\sqrt[\int]{(\Lambda)\kzhi}$.
Corollary~2.7 of~\cite{vandenDries} says that $\sqrt[\int]{(\Lambda)\kzhi}=\sqrt{(\Lambda)}\cdot\kzhi$.
By the faithful flatness of $\kzhi$ over $\kzh$, we get $g\in\sqrt{(\Lambda)}$, as desired.
The reverse containment of part~(b) is clear.
\end{proof}

Now let us consider $\ki$ as a $\Delta$-ring extension of $\k$.
Note that many of the internal notions discussed in the last section restrict to the usual ones on $\k$.\label{intchar-char}
For example, suppose $f,g\in\k$.
Then $f$ is internally of lower rank than $g$ in $\ki$ (respectively internally reduced with respect to $g$) if and only if it is of lower rank than $g$ in $\k$ (respectively reduced with respect to $g$).
This is because rank and reducedness depend only on the shape of $\Delta$-polynomials, and elements of $\k$ have the same shape as their co-ordinates almost everywhere.
Similarly, a finite subset of $\k$ is autoreduced in $\k$ if and only if, as an internal subset of $\ki$, it is internally autoreduced.
Also, the ranking of internally autoreduced subsets of $\ki$ restricts to the usual ranking of autoreduced (finite) subsets of $\k$.
In particular,
{\em if $\Lambda\subset\k$ is an internally characteristic set of a prime internal $\Delta$-ideal of $\ki$, then  $\Lambda$ is a characteristic set of a prime $\Delta$-ideal of $\k$.}
Indeed, if $P\subset\ki$ is a prime internal $\Delta$-ideal witnessing the truth of the antecedent of the above statement, then $P\cap \k$ witnesses the consequent.
The converse, however, is not so straightforward, and constitutes the main goal of this section (cf. Theorem~\ref{intchar=char} below).
We will use the characterisation of characteristic subsets of prime $\Delta$-ideals given by Fact~\ref{thm:Kolchin} (and its nonstandard analogue Fact~\ref{thm:Kolchin:int}).
The following lemma is the main technical result that will allow us to do so.

\begin{lemma}
\label{extendcoherence}
Let $\Lambda \subset K\left\{ X\right\} $ be a finite set.
Then
\begin{itemize}
\item[(a)]
$\Lambda$ is coherent in $\k$ if and only if $\Lambda$ is internally coherent in $\ki$.
\item[(b)]
If $I(\Lambda)$ is prime in $\k$ then $I_{\int}(\Lambda)=I(\Lambda)\ki$, and hence $I_{\int}(\Lambda)$ is prime in $\ki$.
\item[(c)]
Suppose $\Lambda$ is autoreduced and $I(\Lambda)$ is prime in $\k$.
Then $I(\Lambda)$ contains a nonzero element that is reduced with respect to $\Lambda$ if and only if $I_{\int}(\Lambda)$ contains a nonzero element that is internally reduced with respect to $\Lambda$.
\end{itemize}
\end{lemma}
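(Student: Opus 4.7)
The forward direction is immediate and uses only part~(b): if $f \in I(\Lambda) \setminus \{0\}$ is reduced with respect to $\Lambda$, then $f \in I(\Lambda)\ki = I_{\int}(\Lambda)$, and since $f \in \k$ its reducedness in $\k$ coincides with its internal reducedness, as observed at the start of Section~\ref{k-ki-sect}.

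For the converse I argue by contradiction. Assume $I_{\int}(\Lambda)$ contains a nonzero internally reduced element $g$ but $R \cap I(\Lambda) = 0$, where $R$ denotes the $K$-subspace of $\k$ consisting of reduced polynomials. By part~(b), $g \in I(\Lambda)\ki$, so I write $g = \sum_{i=1}^{N} a_i h_i$ with $a_i \in \ki$ and $h_i \in I(\Lambda)$ finite. Apply the standard differential reduction algorithm in $\k$ to each $h_i$: this produces $t_i \in \mathbb{N}$ and a reduced $r_i \in \k$ with $H_\Lambda^{t_i} h_i \equiv r_i$ modulo a partial differential ideal $(\Lambda)_{h'}$. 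From $h_i \in I(\Lambda) = (\Lambda):H_\Lambda^\infty$ one has $H_\Lambda^{t_i} h_i \in (\Lambda) \subseteq I(\Lambda)$, and a careful tracking of the reduction yields $r_i \in I(\Lambda)$. The hypothesis $R \cap I(\Lambda) = 0$ then forces every $r_i = 0$, so after taking $T = \max_i t_i$ we obtain $H_\Lambda^T g = \sum a_i H_\Lambda^T h_i \in (\Lambda)\ki$.

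Consequently the image of $g$ in the localized quotient $D^* := \ki_{H_\Lambda}/(\Lambda)\ki_{H_\Lambda}$ is zero; here $D^*$ is a domain by the primality of $I_{\int}(\Lambda)$ established in~(b). Moreover $R$ injects standardly into $D^*$ via the chain $R \hookrightarrow \k_{H_\Lambda}/(\Lambda)\k_{H_\Lambda} \hookrightarrow D^*$ (the first map being injective by $R \cap I(\Lambda) = 0$ together with the identity $I(\Lambda)\k_{H_\Lambda} = (\Lambda)\k_{H_\Lambda}$, the second by faithful flatness from Proposition~\ref{flatness}). The plan is to upgrade this standard injection to an internal injection $R_{\int} \hookrightarrow D^*$; the image of $g \in R_{\int}$ is zero in $D^*$, so this upgrade will force $g = 0$, contradicting $g \neq 0$.

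The main obstacle, and the delicate part of the argument, is precisely this internal extension of injectivity: a $K$-linear injection $R \hookrightarrow D^*$ does not in general upgrade to an internal-$K$-linear injection $R_{\int} \hookrightarrow D^*$. The plan is to handle this via the finite-height filtration $R = \bigcup_h R_h$, where each $R_h$ is finite-dimensional over $K$ so that $R_h \cap I(\Lambda)\ki = 0$ transfers componentwise by Łoś to $R_h \cap I(\Lambda_r) = 0$ almost everywhere; combined with the primality of $I(\Lambda_r)$ in $K_r\{X\}$ (componentwise from~(b)), this should yield $R_r \cap I(\Lambda_r) = 0$ almost everywhere, and hence $R_{\int} \cap I_{\int}(\Lambda) = 0$ as an internal intersection, completing the contradiction.
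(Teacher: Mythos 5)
Your proposal addresses only part~(c); parts~(a) and~(b) are invoked but never proved. For~(c) the forward direction is fine. In the converse your overall strategy -- establish $H_\Lambda^Tg\in(\Lambda)\ki$, pass to the localized quotient $D^*:=\ki_{H_\Lambda}/(\Lambda)\ki_{H_\Lambda}$ (a domain because, by~(b), $(\Lambda)\ki_{H_\Lambda}=I_{\int}(\Lambda)\ki_{H_\Lambda}$ is prime), and then upgrade a standard injection $R\hookrightarrow D^*$ to an internal one so that $g\neq 0$ survives -- is a reasonable way to organize what has to be shown. (The detour through Ritt reduction is unnecessary and a bit off: $H_\Lambda^Tg\in(\Lambda)\ki$ follows directly from $h_i\in I(\Lambda)$ and the definition of $I(\Lambda)$, and in any case the remainders produced by differential reduction are congruent to $H_\Lambda^{t_i}h_i$ only modulo $(\Lambda)_{h'}$, not modulo $(\Lambda)$, so the ``careful tracking'' you gesture at would itself require an argument.)

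The decisive gap is your claim that each height-filtration piece $R_h$ is finite-dimensional over $K$. It is not. Reducedness with respect to $\Lambda$ bounds the degree only in the leaders $v_f$ and forbids their proper derivatives; it imposes no degree bound on the remaining indeterminates of height at most $h$. Writing $T=\{v_f:f\in\Lambda\}$ and $Y$ for the algebraic indeterminates that are not derivatives of any $v_f$, the subspace $R_h$ contains the whole polynomial ring $K\bigl[Y\cap\{Z_0,\dots,Z_h\}\bigr]$ tensored with a finite-dimensional $T$-part, and this is infinite-dimensional over $K$ as soon as some $Y$-indeterminate has height $\leq h$. That kills the \L o\'s transfer: to move $R_h\cap I(\Lambda)\ki=0$ to a componentwise statement you need $R_h$ to be finitely $K$-spanned, and it is not. (Even granting a fix at each fixed $h$, the ultrafilter set where the componentwise vanishing holds would depend on $h$, while $g\in\ki$ need not have componentwise bounded height -- you cannot intersect countably many such sets.) The paper's proof is built precisely to sidestep this: it isolates $Y$ and $T$, localizes the $Y$-variables by passing to the internal ring $L[T]_{\int}$ over the internal field $L$ with components $K_r(Y)$, observes that the reducedness bound on $T$-degrees makes $g$ a \emph{standard} element of $L[T]$, and then applies faithful flatness of $L[T]_{\int}$ over $L[T]$ and flatness of $K[Y]_{\int}$ over $K[Y]$ to descend to a nonzero reduced element of $I(\Lambda)$. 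That localization step, which tames the unbounded $Y$-part, is exactly what your outline is missing.
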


\begin{proof}
Toward a proof of part~(a), let $F$ be the set of pairs $(f,g)$ of distinct elements from $\Lambda$ such that there exists $\theta_1,\theta_2\in\Theta$ with $\theta_1v_f=\theta_2v_g$.
If $(f,g)\in F$, let $\theta_f$ and $\theta_g$ be such that $\theta_fv_{f}=\theta_gv_g$ is of minimal height, say $h(f,g)$.
Note that despite the notation both operators $\theta_f$ and $\theta_g$ depend on the pairt $(f,g)$.

Suppose that $\Lambda$ is coherent.
By the coherence of $\Lambda$, if $(f,g)\in F$ then
$S_g\left(\theta_ff\right)-S_f\left(\theta_gg\right)\in\left(\Lambda\right)_{h(f,g)-1}$.
It follows that for some $V_{f,g}\in\mathcal U$ and all $r\in V_{f,g}$,
$S_{g\left(r\right)}\left(\theta_ff\left(r\right)\right)-S_{f\left(r\right)}\left(\theta_gg\left(r\right)\right)\in\left(\Lambda_r\right)_{h(f,g)-1}$
in $K_r\left\{X\right\}$.
We are using here also the fact that the separant of a $\Delta$-polynomial can be computed co-ordinatewise almost everwhere.

Let $\displaystyle V=\bigcap_{(f,g)\in F}V_{f,g}$.
We have already pointed out that internal autoreducedness extends autoreducedness, so, shrinking $V$ if necessary, we may assume that $\Lambda_r$ is autoreduced for all $r\in V$.
Shrinking $V$ further, we may also assume that $f$ and $f(r)$ have the same shape for all $f\in\Lambda$, and that if $f\neq g$ in $\Lambda$ then $f(r)\neq g(r)$ in $\Lambda_r$.
We now show that for each $r\in V$, $\Lambda_r$ is coherent in $K_r\left\{X\right\}$.
Suppose $f(r),g(r)\in\Lambda_r$ are distinct and there are $\theta_1,\theta_2\in\Theta$ such that $\theta_1v_{f(r)}=\theta_2v_{g(r)}$ is of height $h$, and that $h$ is minimal such.
Since this is a fact just about the shape of $f(r)$ and $g(r)$, it follows that $(f,g)\in F$, $\theta_1=\theta_f$, $\theta_2=\theta_g$, and $h=h(f,g)$.
Hence, as $r\in V_{f,g}$ we have that 
$S_{g\left(r\right)}\left(\theta_1f\left(r\right)\right)-S_{f\left(r\right)}\left(\theta_2g\left(r\right)\right)\in\left(\Lambda_r\right)_{h-1}$.
This proves that $\Lambda_r$ is coherent.
Hence $\Lambda$ is internally coherent.

For the converse, suppose that $\Lambda$ is internally coherent.
Then $\Lambda$ is internally autoreduced and hence autoreduced.
Suppose $(f,g)\in F$ and set $P:=S_g\left(\theta_ff\right)-S_f\left(\theta_gg\right)$.
Let $\{Q_1,\dots,Q_\ell\}$ be $\Lambda$ together with its derivatives up to height $h(f,g)-1$.
We need to show that $P\in (Q_1,\dots,Q_\ell)\k$.
Let $V\in U$ be such that for all $r\in V$, $\Lambda_r$ is coherent, $\theta_fv_{f(r)}=\theta_gv_{g(r)}$ with height $h(f,g)$, and this height is minimal such.
Shrinking $V$ further we may also assume that $\{Q_1(r),\dots,Q_\ell(r)\}$ is $\Lambda_r$ together with all its derivatives up to height $h(f,g)-1$.
By coherence, 
$$P(r)\in\left(\Lambda_r\right)_{h(f,g)-1}=\left(Q_1(r),\dots,Q_\ell(r)\right)K_r\left\{X\right\}$$
Hence
$P\in\left(Q_1,\dots,Q_\ell\right)\ki$.
But $P$ and the $Q_j$'s are in $\k$, and, by Proposition~\ref{flatness}, $\ki$ is faithfully flat over $\k$.
Hence
$P\in(Q_1,\dots,Q_\ell)\k$, as desired.

For part~(b), recall first of all that by definition
$$I(\Lambda)=\left\{g\in K\left\{X\right\}:H_\Lambda^tg\in (\Lambda)\k,\text{ for some }t\in\mathbb N\right\}$$
and
$$I_{\int}(\Lambda)=\left\{g\in\ki:\left(H^{\int}_\Lambda\right)^Ng\in (\Lambda)_{\int},\text{ for some }N\in\mathbb N^*\right\}$$
But $H_{\Lambda}^{\int}=H_{\Lambda}$ since separants and initials of elements in $\k$ can be computed co-ordinatewise almost everywhere.
Similarly, $\left(\Lambda\right)_{\int}=\left(\Lambda\right)\ki$ because $\Lambda$ is finite and hence $\left(\Lambda\right)\ki$ is internal with components $(\Lambda_r) K_r\{X\}$ where $\Lambda_r:=\{f(r):f\in\Lambda\}$.
So in fact,
$$I_{\int}(\Lambda)=\left\{g\in\ki:H_\Lambda^Ng\in (\Lambda)\ki,\text{ for some }N\in\mathbb N^*\right\}$$
It follows that $I(\Lambda)\subset I_{\int}(\Lambda)$, and so $I(\Lambda)\ki\subseteq I_{\int}(\Lambda)$

For the reverse containment, suppose toward a contradiction that $g\in I_{\int}(\Lambda)$ and $g\notin I(\Lambda)\ki$.
Then $H_\Lambda^Ng\in (\Lambda)\ki\subset I(\Lambda)\ki$ for some $N\in\mathbb N^*$.
Since $I(\Lambda)$ is prime by assumption, Proposition~\ref{flatness}(a) tells us that $I(\Lambda)\ki$ is also prime.
Hence $H_\Lambda^N\in I(\Lambda)\ki$.
By Proposition~\ref{flatness}(b), $H_\Lambda^t\in I(\Lambda)$ for some $t\in\mathbb N$.
Increasing $t$ if necessary, $H_\Lambda^t\in (\Lambda)$.
But this means that $1\in I(\Lambda)$, contradicting the primality of that ideal.
We have shown that $I_{\int}(\Lambda)= I(\Lambda)\ki$.
In particular, by Proposition~\ref{flatness}(a), $I_{\int}(\Lambda)$ is prime.

We now turn to part~(c); $\Lambda$ is autoreduced and $I(\Lambda)$ is prime.
One direction is straightforward: a nonzero element of $I(\Lambda)$ reduced with respect to $\Lambda$ is itself a nonzero element of $I_{\int}(\Lambda)$ internally reduced with respect to $\Lambda$.
For the converse, let $Z=(Z_i:i<\omega)$ again enumerate the algebraic indeterminates according to height.
Consider the leaders $T:=\{v_f:f\in\Lambda\}$ and let $Y:=\{Z_i:Z_i\text{ is not a derivative of any }v_f\in T\}$.
Since $\Lambda$ is autoreduced, all the algebraic indeterminates appearing in $\Lambda$ come from $Y\cup T$.
It follows, as is remarked after Lemma~2 of Chapter~4 of~\cite{Kolchin}, that $I(\Lambda)$ is generated by $I:=I(\Lambda)\cap K[Y,T]$.
Hence, $I_{\int}(\Lambda)$, which by part~(b) is equal to $I(\Lambda)K[Z]_{\int}$, is in fact equal to $IK[Z]_{\int}$.

Now suppose $g\in I_{\int}(\Lambda)=IK[Z]_{\int}$ is nonzero and internally reduced with respect to $\Lambda$.
Let $V\in\mathcal U$ be such that $g(r)$ is reduced with respect to $\Lambda_r:=\{f(r):f\in\Lambda\}$ for all $r\in V$.
In particular,  $g(r)\in K_r[Y,T]$, and so $g\in K[Y,T]_{\int}\cap IK[Z]_{\int}$.
But $K[Z]_{\int}$ is faithfully flat over $K[Y,T]_{\int}$ since, for all $r$, $K_r[Z]$ is faithfully flat over $K_r[Y,T]$.
So $g\in IK[Y,T]_{\int}$.

For each $r\in\mathbb I$, set $L_r$ to be the field $K_r(Y)$ and let $L$ be the corresponding internal field.
Note that $L$ is the fraction field of $R:=K[Y]_{\int}$
The ultraproduct of the polynomial rings $L_r[T]$ gives rise to the ring of internal polynomails $L[T]_{\int}$, which extends $L[T]$ via the usual embedding.
Since $\deg_{v_f}\big(g(r)\big)<\deg_{v_f}(f)$, for all $r\in V$ and $f\in\Lambda$, we have that $g\in L[T]$.
But we saw above that $g\in IK[Y,T]_{\int}\subset IL[T]_{\int}$.
By the faithful flatness of $L[T]_{\int}$ over $L[T]$, $IL[T]_{\int}\cap L[T]=IL[T]$.
So $g\in IL[T]$.
Now we can clear denominators: there is $h\in R=K[Y]_{\int}$ such that $f:=hg\in IR[T]$.
Note that $f$ is still internally reduced with respect to $\Lambda$ as we have only multiplied by an internal polynomial in which no derivatives of $T$ appear, co-ordinatewise almost everywhere.

Write $f=h_1g_1+\cdots+h_\ell g_\ell$ where $g_1,\dots,g_\ell\in I\subset K[Y,T]$ and $h_1,\dots,h_\ell\in R[T]$.
Letting $(M_j:j\in J)$ be the monomials in $T$ that appear in $f$, the $h_i$'s, $g_i$'s, and $h_ig_i$'s, we can write
\begin{eqnarray*}
f&=&\sum_{j\in J}f_jM_j\\
h_i&=&\sum_{j\in J}h_{i,j}M_j\\
g_i&=&\sum_{j\in J}g_{i,j}M_j
\end{eqnarray*}
where $f_j,h_{i,j}\in K[Y]_{\operatorname{int}}$ and $g_{i,j}\in K[Y]$.
Let $J'\subset J$ be those indices $j$ such that $f_j\neq 0$.
Then for each $j\in J'$,
\begin{eqnarray*}
f_j&=&\sum_{i=1}^\ell\sum_{M_{j_1}M_{j_2}=M_j}h_{i,j_1}g_{i,j_2}
\end{eqnarray*}
and for each $j\in J\setminus J'$,
\begin{eqnarray*}
0&=&\sum_{i=1}^\ell\sum_{M_{j_1}M_{j_2}=M_j}h_{i,j_1}g_{i,j_2}
\end{eqnarray*}
That is, the $f_j$'s and $h_{i,j}$'s are a solution in $K[Y]_{\operatorname{int}}$ to the system of linear equations over $K[Y]$ given by
\begin{eqnarray*}
0&=&-S_j+\sum_{i=1}^\ell\sum_{M_{j_1}M_{j_2}=M_j}g_{i,j_2}S_{i,j_1}
\end{eqnarray*}
for $j\in J'$, and 
\begin{eqnarray*}
0&=&\sum_{i=1}^\ell\sum_{M_{j_1}M_{j_2}=M_j}g_{i,j_2}S_{i,j_1}
\end{eqnarray*}
for $j\in J\setminus J'$.
Note that $J'\neq\emptyset$ as $f\neq 0$.
By the flatness of $K[Y]_{\int}$ over $K[Y]$, our solution is a $K[Y]_{\operatorname{int}}$-linear combination of solutions in $K[Y]$.
In particular, there must exist a solution $(f_j',h_{i,j}')$ in $K[Y]$.
In fact, we can find such a solution such that for some $j\in J'$, $f_j'\neq 0$.
Hence
$f':=\sum_{j\in J'}f_j'M_j\neq 0$.
Moreover, setting
$h_i':=\sum_{j\in J}h_{i,j}'M_j \ \in K[Y,T]$,
we get that $f'=h_1'g_1+\cdots+h_\ell' g_\ell\in I\subset I(\Lambda)$.
Note that as $f'\in K[Y,T]$ and in each $T$-variable the degree of $f'$ is no greater than it was in $f$, $f'$ is reduced with respect to $\Lambda$.
We have found a nonzero element of $I(\Lambda)$ that is reduced with respect to $\Lambda$, as desired.
\end{proof}

We can now apply Facts~\ref{thm:Kolchin} and~\ref{thm:Kolchin:int} to see that characteristic sets for prime $\Delta$-ideals in $\k$ can be recognised in $\ki$.

\begin{theorem}
\label{intchar=char}
Suppose $\Lambda\subset\k$ is finite.
Then $\Lambda$ is a characteristic set for a prime $\Delta$-ideal of $\k$ if and only if it is an internally characteristic set for a prime internal $\Delta$-ideal of $\ki$.
\end{theorem}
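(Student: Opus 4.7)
The plan is to combine Fact~\ref{thm:Kolchin}, its internal analogue Fact~\ref{thm:Kolchin:int}, and Lemma~\ref{extendcoherence}, of which this theorem is essentially the direct beneficiary. The right-to-left direction is already remarked on just before Lemma~\ref{extendcoherence}: if $P \subset \ki$ is a prime internal $\Delta$-ideal and $\Lambda \subset \k$ is internally characteristic for $P$, then $P \cap \k$ is a prime $\Delta$-ideal of $\k$, $\Lambda$ is autoreduced in $\k$ (since autoreducedness of finite subsets of $\k$ coincides with internal autoreducedness), and $\Lambda$ is minimally ranked among autoreduced subsets of $P \cap \k$ because any lower-ranked autoreduced competitor would also be an internally autoreduced subset of $P$ of lower rank. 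So one sentence handles this direction.

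For the main direction, suppose $\Lambda \subset \k$ is finite and is a characteristic set of some prime $\Delta$-ideal of $\k$. Being finite, $\Lambda$ is already an internal subset of $\ki$, and being a characteristic set it is autoreduced in $\k$, hence internally autoreduced in $\ki$ by the discussion on page~\pageref{intchar-char}. Applying Fact~\ref{thm:Kolchin} extracts two conditions: $\Lambda$ is coherent in $\k$, and $I(\Lambda)$ is a prime ideal of $\k$ containing no nonzero element reduced with respect to $\Lambda$. I then verify the two hypotheses of Fact~\ref{thm:Kolchin:int} for $\Lambda$ viewed internally. Internal coherence follows from Lemma~\ref{extendcoherence}(a); primality of $I_{\int}(\Lambda)$, together with the identification $I_{\int}(\Lambda) = I(\Lambda)\ki$, follows from Lemma~\ref{extendcoherence}(b); and the absence of nonzero elements in $I_{\int}(\Lambda)$ internally reduced with respect to $\Lambda$ follows from Lemma~\ref{extendcoherence}(c), whose hypotheses ($\Lambda$ autoreduced and $I(\Lambda)$ prime) are in hand. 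Fact~\ref{thm:Kolchin:int} then delivers that $\Lambda$ is internally characteristic for a prime internal $\Delta$-ideal of $\ki$, namely $I_{\int}[\Lambda]$.

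There is no real obstacle here: the technical content has been absorbed by Lemma~\ref{extendcoherence}, and the only bookkeeping is to notice that autoreducedness in $\k$ lifts to internal autoreducedness in $\ki$ (so that the coherence and reducedness notions in the two Kolchin-style criteria align), and that finiteness of $\Lambda$ lets us treat it uniformly as a standard and an internal object.
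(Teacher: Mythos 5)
Your proposal is correct and follows essentially the same route as the paper: apply Fact~\ref{thm:Kolchin} to extract coherence and the condition on $I(\Lambda)$, transfer each piece to the internal setting via parts (a), (b), (c) of Lemma~\ref{extendcoherence}, and conclude by Fact~\ref{thm:Kolchin:int}, with the right-to-left direction handled by the observation that internal autoreducedness and internal rank restrict to the standard notions on $\k$. The only difference is that you spell out the easy direction a bit more explicitly and name the resulting prime internal $\Delta$-ideal as $I_{\int}[\Lambda]$, which the paper leaves implicit.
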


\begin{proof}
The right-to-left direction we have already mentioned: If $\Lambda$ is internally characteristic for $P\subset\ki$ then it is characteristic for $P\cap\k$.
This is because internal autoreducedness and rank in $\ki$ restrict to autoreducedness and rank in $\k$.
(See the discussion on page~\pageref{intchar-char}.)

For the converse, suppose $\Lambda$ is characteristic for a prime $\Delta$-ideal $P\subset\k$.
By Fact~\ref{thm:Kolchin}, $\Lambda$ is coherent and $I(\Lambda)$ is a prime ideal of $\k$ containing no nonzero elements reduced with respect to $\Lambda$.
By Lemma~\ref{extendcoherence}, $\Lambda$ is internally coherent in $\ki$, $I_{\int}(\Lambda)$ is prime, and $I_{\int}(\Lambda)$ contains no nonzero element internally reduced with respect to $\Lambda$.
Hence, by Fact~\ref{thm:Kolchin:int}, $\Lambda$ is an internally charcateristic set for a prime internal $\Delta$-ideal.
\end{proof}

Recall that for $\Lambda\subset\k$, $I[\Lambda]=\{g\in K\{X\}:H_\Lambda^ng\in [\Lambda], \text{ for some } n\in\mathbb N\}$ and $I_{\int}[\Lambda]=\{g\in\ki:H_{\Lambda}^Ng\in [\Lambda]_{\operatorname{int}}, \text{ for some } N\in\mathbb N^*\}$.
(We are using here again the fact that, since $\Lambda\subset\k$, $H_\Lambda=H_\Lambda^{\int}$.)
One would like to prove, analogously to Lemma~\ref{extendcoherence}(b), that if $I[\Lambda]$ is prime and $\Lambda$ is its characteristic set, then $I_{\int}[\Lambda]=\big\{I[\Lambda]\big\}_{\int}$.
But we are not able to do so.
In fact, as we will see in the next section (see statement~(B) of Proposition~\ref{thm:Nonstandard_Equiv}), this would imply the definability of primality conjecture discussed in the introduction.
Nevertheless, we can use Theorem~\ref{intchar=char} to show that $I_{\int}[\Lambda]$ is least among the prime internal $\Delta$-ideals that lie above $I[\Lambda]$.

\begin{corollary}
\label{cor:Containment_Above}
Suppose $\Lambda\subset\k$ is a characteristic set for a prime $\Delta$-ideal.
Then $I_{\int}[\Lambda]$ is a prime internal $\Delta$-ideal,
$I_{\int}[\Lambda]\cap\k=I[\Lambda]$, and if $P$ is any prime internal $\Delta$-ideal of $\ki$ with $P\cap\k=I[\Lambda]$ then $I_{\int}[\Lambda]\subset P$.
\end{corollary}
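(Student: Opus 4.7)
The plan is to extract all three claims from Theorem~\ref{intchar=char} together with the nonstandard determination of a prime internal $\Delta$-ideal by its internally characteristic set recorded just before Fact~\ref{thm:Kolchin:int}. First I would handle the primality assertion: since $\Lambda$ is a characteristic set of the prime $\Delta$-ideal $I[\Lambda]\subset\k$, Theorem~\ref{intchar=char} produces a prime internal $\Delta$-ideal $Q\subset\ki$ for which $\Lambda$ is internally characteristic. Applying the nonstandard formula $Q=I_{\int}[\Lambda]$ then proves $I_{\int}[\Lambda]$ is prime. As a bonus, this establishes that $\Lambda$ is itself internally characteristic for $I_{\int}[\Lambda]$, a fact I will use twice below.

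For the equality $I_{\int}[\Lambda]\cap\k=I[\Lambda]$, the inclusion from right to left is immediate from the definitions, noting $[\Lambda]\subset[\Lambda]_{\int}$ and $\mathbb N\subset\mathbb N^*$. For the reverse inclusion, I would invoke the remarks on page~\pageref{intchar-char}: since internal autoreducedness and rank restrict to the usual notions on~$\k$, and since $\Lambda$ is internally characteristic for the prime internal $\Delta$-ideal $I_{\int}[\Lambda]$, the intersection $I_{\int}[\Lambda]\cap\k$ is a prime $\Delta$-ideal of $\k$ with $\Lambda$ as characteristic set. The determination of prime $\Delta$-ideals by their characteristic sets (now applied inside $\k$) forces $I_{\int}[\Lambda]\cap\k=I[\Lambda]$.

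For the minimality assertion, take a prime internal $\Delta$-ideal $P$ with $P\cap\k=I[\Lambda]$ and $g\in I_{\int}[\Lambda]$, so $H_\Lambda^Ng\in[\Lambda]_{\int}$ for some $N\in\mathbb N^*$. Since $\Lambda\subset P$ and $P$ is an internal $\Delta$-ideal, the property of $[\Lambda]_{\int}$ noted in Section~\ref{intdelt} gives $[\Lambda]_{\int}\subset P$, whence $H_\Lambda^Ng\in P$. Now I would apply primality componentwise: almost all $P_r$ are prime and $H_\Lambda(r)^{N(r)}g(r)\in P_r$ almost everywhere, so for almost every $r$ either $H_\Lambda(r)\in P_r$ or $g(r)\in P_r$; hence $H_\Lambda\in P$ or $g\in P$. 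The former alternative must be ruled out: it would give $H_\Lambda\in P\cap\k=I[\Lambda]$, hence $H_\Lambda^{t+1}\in[\Lambda]$ for some standard~$t$, i.e.\ $1\in I[\Lambda]$, contradicting properness of the prime $\Delta$-ideal $I[\Lambda]$. So $g\in P$, completing $I_{\int}[\Lambda]\subset P$.

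The only delicate point is this last primality extraction with a nonstandard exponent~$N$: one cannot argue ``inside $\ki$'' by induction on the exponent. The obstacle disappears at the component level because $H_\Lambda\in\k$ has standard components, so $H_\Lambda(r)^{N(r)}\in P_r$ for a standard element $H_\Lambda(r)$ of a prime ideal $P_r$ forces $H_\Lambda(r)\in P_r$ by ordinary primality, and then internality transfers this back to $H_\Lambda\in P$.
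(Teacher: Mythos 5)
Your proposal is correct and follows essentially the same route as the paper: derive primality of $I_{\int}[\Lambda]$ from Theorem~\ref{intchar=char} together with the fact that a prime internal $\Delta$-ideal is determined by an internally characteristic set; deduce the equality with $I[\Lambda]$ by observing that $\Lambda$ is then characteristic in $\k$ for the prime $\Delta$-ideal $I_{\int}[\Lambda]\cap\k$; and prove minimality by reducing $H_\Lambda^N g\in P$ to the component level and using that $H_\Lambda\notin P$. Your extra remark spelling out why the nonstandard exponent is handled componentwise is exactly the point the paper uses implicitly.
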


\begin{proof}
By Theorem~\ref{intchar=char}, $\Lambda$ is internally characteristic for a prime internal $\Delta$-ideal of $\ki$, which must therefore be $I_{\int}[\Lambda]$.
It follows that $I_{\int}[\Lambda]\cap\k$ is prime and that $\Lambda$, being minimally ranked among the autoreduced subsets of this ideal, is characteristic for $I_{\int}[\Lambda]\cap\k$.
But  then $I_{\int}[\Lambda]\cap\k=I[\Lambda]$.

Suppose $P$ is a prime internal $\Delta$-ideal of $\ki$ with $P\cap\k=I[\Lambda]$.
Note that $H_{\Lambda}\notin P$ else it would be in $P\cap\k=I[\Lambda]$ forcing the latter to be all of $\k$.
Suppose $g\in I_{\int}[\Lambda]$ so that $H_\Lambda^Ng\in[\Lambda]_{\int}\subset P$, for some $N\in\mathbb N^*$.
Since $H_{\Lambda}\notin P$, $H_\Lambda(r)\notin P_r$ for almost all $r$, and so by primality $g(r)\in P_r$ for almost all $r$.
Hence $g\in P$.
So $I_{\int}[\Lambda]\subset P$, as desired.
\end{proof}

\begin{corollary}
\label{thm:Nonstandard_Nullstellensatz}
Suppose $S\subset\k$.
Then $\{S\}_{\int}\cap\k=\{S\}$.
\end{corollary}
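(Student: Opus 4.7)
The easy direction $\{S\} \subset \{S\}_{\int} \cap \k$ is immediate, since $\{S\}_{\int} \cap \k$ is a radical $\Delta$-ideal of $\k$ containing $S$, and $\{S\}$ is by definition the smallest such. So the task reduces to establishing the reverse inclusion $\{S\}_{\int} \cap \k \subset \{S\}$.

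To that end, the plan is to invoke the classical fact that in every Ritt algebra a radical $\Delta$-ideal equals the intersection of the prime $\Delta$-ideals containing it; thus $\{S\} = \bigcap_P P$, where $P$ ranges over the prime $\Delta$-ideals of $\k$ containing $S$. It will then suffice to show $\{S\}_{\int} \cap \k \subset P$ for each such $P$.

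Fix a prime $\Delta$-ideal $P \supset S$ of $\k$ and choose a characteristic set $\Lambda \subset P$ for $P$, so that $P = I[\Lambda]$. Apply Corollary~\ref{cor:Containment_Above} to obtain that $I_{\int}[\Lambda]$ is a prime internal $\Delta$-ideal of $\ki$ with $I_{\int}[\Lambda] \cap \k = I[\Lambda] = P$. Being prime, $I_{\int}[\Lambda]$ is a radical internal $\Delta$-ideal, and it contains $S$ since $S \subset P$. By the universal property of $\{S\}_{\int}$ recorded in Section~\ref{intdelt}, $\{S\}_{\int} \subset I_{\int}[\Lambda]$, and intersecting with $\k$ yields $\{S\}_{\int} \cap \k \subset P$. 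Taking the intersection over all such $P$ completes the argument.

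No genuine obstacle will arise: the substantive content has already been absorbed into Lemma~\ref{extendcoherence} and Theorem~\ref{intchar=char}, which underpin Corollary~\ref{cor:Containment_Above}. The only inputs from outside that development are the existence of characteristic sets for prime $\Delta$-ideals, recorded in Section~\ref{revsect}, and the prime-intersection description of radical $\Delta$-ideals in characteristic zero, both classical. In particular, the finite Ritt--Raudenbush decomposition of $\{S\}$ is not needed; the intersection description alone suffices.
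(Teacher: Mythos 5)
Your proof is correct, and it follows the same essential path as the paper's: both reduce the reverse inclusion to the observation that for any prime $\Delta$-ideal $P\supset S$ of $\k$ with characteristic set $\Lambda$, the ideal $I_{\int}[\Lambda]$ (via Corollary~\ref{cor:Containment_Above}) is a radical internal $\Delta$-ideal containing $S$ whose trace on $\k$ is $P$, whence $\{S\}_{\int}\cap\k\subset P$. The one genuine difference is your use of the decomposition: the paper writes $\{S\}=P_1\cap\dots\cap P_\ell$ using the differential prime decomposition theorem (Ritt--Raudenbush, i.e.\ the Noetherian property of radical $\Delta$-ideals in $\k$), whereas you use only the fact that in any Ritt algebra a radical $\Delta$-ideal equals the intersection of \emph{all} prime $\Delta$-ideals lying above it, which requires just Zorn's lemma and not the ascending chain condition. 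Your version is thus marginally lighter on hypotheses; the argument proceeding $P$ by $P$ means finiteness is never used, so nothing is lost. One small housekeeping point worth making explicit: the case $\{S\}=\k$ is covered because there are then no prime $\Delta$-ideals containing $S$, so the intersection is the empty intersection $\k$ and the equality holds trivially in conjunction with the easy inclusion; the paper handles this as a separate first line, and you may want to as well for clarity.
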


\begin{proof}
If $\left\{S\right\} =\k$ then
$\{S\}_{\int}=\ki$ and we are done.
Otherwise $\{S\}$ is a proper radical $\Delta$-ideal and by the differential prime decomposition theorem
 there must exist finitely many prime $\Delta$-ideals, $P_1,\dots,P_\ell\subset\k$, such that
$\{S\}=P_1\cap\cdots\cap P_\ell$.
For each $i=1,\dots,\ell$, let $P_i=I[\Lambda_i]$ where $\Lambda_i$ is characteristic for $P_i$.
By Corollary~\ref{cor:Containment_Above}, each $I_{\int}[\Lambda_{i}]$ is a prime internal $\Delta$-ideal and $I_{\int}[\Lambda_{i}]\cap\k=P_i$.
In particular, $I_{\int}[\Lambda_1]\cap\cdots\cap I_{\int}[\Lambda_\ell]$ is a radical internal $\Delta$-ideal of $\ki$ that contains $S$.
It follows that $\{S\}_{\int}\subset I_{\int}[\Lambda_1]\cap\cdots\cap I_{\int}[\Lambda_\ell]$.
So, working in $\k$,
\begin{eqnarray*}
\{S\}
&\subset&
\{S\}_{\int}\cap\k\\
&\subset&
\big(I_{\int}[\Lambda_1]\cap\cdots\cap I_{\int}[\Lambda_\ell]\big)\cap\k\\
&=&
\big(I_{\int}[\Lambda_1]\cap\k\big)\cap\cdots\cap\big(I_{\int}[\Lambda_\ell]\cap\k\big)\\
&=&
P_1\cap\cdots\cap P_\ell\\
&=&
\{S\}
\end{eqnarray*}
which proves that $\{S\}_{\int}\cap\k=\{S\}$.
\end{proof}

\begin{remark}
We do not know if $[S]_{\int}\cap\k=[S]$ for all finite sets $S\subset\k$.
Since our proof of Corollary~\ref{thm:Nonstandard_Nullstellensatz} goes via prime $\Delta$-ideals it does not help.
One might hope to imitate what happens in the algebraic setting, where $(S)_{\int}\cap\k=(S)$ follows by faithful flatness from the fact that $(S)_{\int}=(S)\ki$.
The latter holds because if $g\in(S)_{\int}$ then for almost all $r$ we have $g(r)=\sum_{f\in S}g_{f,r}f(r)$ where the $g_{f,r}$ are in $K_r\{X\}$, and hence $g=\sum_{f\in S}g_ff$ where $g_f$ is chosen so that $g_f(r)=g_{f,r}$ almost everywhere.
But this argument cannot be extended to $\Delta$-ideals; it is not necessarily the case that $[S]_{\int}$ is the $\Delta$-ideal of $\ki$ generated by $S$.
For a counterexample consider the case when $\Delta$ is a single derivation $\delta$, $X$ is a single variable, and $\mathbb I=\omega$.
The internal $\delta$-ideal generated by the variable $X$ in $\ki$ clearly contains the internal $\delta$-polynomial $\delta^{\omega}X$ whose $r$th co-ordinate is $\delta^rX$.
But $\delta^\omega X$ is not in the $\delta$-ideal generated by $X$ in $\ki$ since the latter is the ideal generated by the set $\{\delta^mX:m<\omega\}$.
Note that this counterexample is radical (indeed prime), so that even passing to radicals does not avoid the problem.
\end{remark}

\bigskip
\section{Definability of primality and related problems}

\noindent
Throughout this section all nonstandard notions are with respect to a fixed nonprincipal ultrafilter $\mathcal U$ on $\omega$.
Also, we continue to work with $m$ commuting derivations $\Delta=\{\delta_1,\dots,\delta_m\}$ in $n$ differential indeterminates $X=(X_1,\dots,X_n)$.

The original motivation for this work was to apply nonstandard methods to the problem of the existence of uniform bounds in checking if the radical $\Delta$-ideal generated by a given finite set of $\Delta$-polynomials is prime.
This is the definability of primality problem discussed in the introduction (Conjecture~\ref{primeconjecture}).
The algebraic analogue of Conjecture~\ref{primeconjecture} was proved using nonstandard methods by van den Dries and Schmidt~\cite{vandenDries}, and we have tried here to extend their approach to the differential setting.
The following translates definability of primality into a statement about rings of internal $\Delta$-polynomials.

\begin{proposition}[Nonstandard formulation of definability of primality]
\label{primeconjecture=int}
Conjecture~\ref{primeconjecture} of the introduction is equivalent to the following statement:
\begin{itemize}
\item[(A)]
For any internal $\Delta$-field $K$ and internal $S\subset\k$, if $\{S\}_{\int}\cap\k$ is prime in $\k$ then $\{S\}_{\int}$ is prime in $\ki$.
\end{itemize}
\end{proposition}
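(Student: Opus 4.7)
The plan is to prove both implications by direct ultraproduct translations, relying on the intrinsic characterisation of $\k$ recorded at the start of Section~\ref{k-ki-sect} (an element of $\ki$ lies in $\k$ exactly when its components have uniformly bounded degree and order) together with the immediate consequence of {\L}o\'s's theorem that an internal $\Delta$-ideal of $\ki$ is prime if and only if its components are prime in $K_r\{X\}$ almost everywhere.

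For (A)~$\Rightarrow$~Conjecture~\ref{primeconjecture}, I would argue by contraposition. Assuming the Conjecture fails at some $d$, for each $r\in\omega$ one obtains a $\Delta$-field $k_r$ of characteristic zero and a finite set $S_r\subset k_r\{X\}$ of degree and order $\leq d$ such that $\{S_r\}$ is proper and passes the bounded-primality test of clause~(ii) at level~$r$ but is not prime. Since the number of monomials in $\Theta X$ of degree and order $\leq d$ is a fixed integer $M=M(d,n,m)$, after replacing each $S_r$ by a spanning subset of its $k_r$-linear span and padding by zeros I may assume $|S_r|=M$ for every~$r$. Setting $K:=\prod_\mathcal{U}k_r$, the sequence $(S_r)$ then determines an $M$-element internal subset $S\subset\ki$ which, by the intrinsic characterisation, actually lies in $\k$. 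Now if $f,g\in\k$ satisfy $fg\in\{S\}_{\int}\cap\k$, let $D$ bound the degrees and orders of $f,g$; for almost all $r\geq D$ the level-$r$ bounded-primality test for $S_r$ applied to $f(r)g(r)\in\{S_r\}$ yields $f(r)\in\{S_r\}$ or $g(r)\in\{S_r\}$, and the ultrafilter dichotomy then forces $f\in\{S\}_{\int}$ or $g\in\{S\}_{\int}$. Combined with the properness of $\{S\}_{\int}\cap\k$ (inherited from the $\{S_r\}$), this shows $\{S\}_{\int}\cap\k$ is prime. But no $\{S_r\}$ is prime, so $\{S\}_{\int}$ is not prime in $\ki$, contradicting~(A).

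For the converse, assume the Conjecture and let $S\subset\k$ be internal with $\{S\}_{\int}\cap\k$ prime. Let $d$ bound the degrees and orders of elements of~$S$ and set $r_0:=r(d,n,m)$. It suffices to show that $\{S_r\}$ is prime in $K_r\{X\}$ for almost all~$r$; properness componentwise is immediate, so applying the Conjecture in each $K_r$ reduces matters to verifying the level-$r_0$ bounded-primality test almost everywhere. Suppose, toward a contradiction, that on a $\mathcal U$-large set of indices there exist $f_r,g_r\in k_r\{X\}$ of degree and order $\leq r_0$ with $f_rg_r\in\{S_r\}$ but $f_r,g_r\notin\{S_r\}$. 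The internal elements $f,g\in\ki$ with these components lie in $\k$ by the uniform degree-and-order bound, and $fg\in\{S\}_{\int}\cap\k$, so primality of the latter forces $f\in\{S\}_{\int}$ or $g\in\{S\}_{\int}$, contradicting the choice of $f_r$ or $g_r$ on a $\mathcal U$-large set.

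The one point that genuinely requires care is the uniform-cardinality reduction in the first direction: a priori the $|S_r|$ are unbounded, and one has to invoke the uniform dimension bound on the $k_r$-space of degree-and-order-$\leq d$ polynomials in order to replace each $S_r$ by a bounded-size spanning set so that the resulting $S$ is a genuine internal subset of~$\k$ and not merely of $\ki$. Once that bookkeeping is in place, everything else is a routine application of {\L}o\'s's theorem and the ultrafilter dichotomy.
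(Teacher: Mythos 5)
Your argument is essentially the paper's, and it is correct. One small misconception is worth flagging: in the first direction you insist that the uniform-cardinality reduction (replacing $S_r$ by an at-most-$M$-element spanning set of its linear span) is needed ``so that the resulting $S$ is a genuine internal subset of $\k$.'' In fact $S\subset\k$ is automatic once the components $S_r$ have degree and order uniformly bounded by $d$: any $f\in S$ satisfies $f(r)\in S_r$ almost everywhere, so $f(r)$ has degree and order $\leq d$ almost everywhere, and the intrinsic characterisation of $\k$ then puts $f\in\k$ regardless of $|S_r|$. (Membership of $S$ in $\k$ is about the elements of $S$ individually, not about the cardinality of the components.) The paper accordingly makes no such reduction in the proof of this proposition; the reduction you describe is exactly the content of Remark~\ref{int=finite}, where it is used for the genuinely different purpose of showing that statement~(A) restricted to \emph{finite} internal $S\subset\k$ already implies Conjecture~\ref{primeconjecture}. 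So your extra step is harmless and in fact proves a slightly sharper implication, but your stated rationale for it is not right. The converse direction and the rest of the argument match the paper.
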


\begin{proof}
If the conjecture fails then there exists a $d<\omega$, and for every $r<\omega$ a $\Delta$-field $K_r$ with a finite set $S_r\subset K_r\{X\}$ of order and degree $\leq d$ such that $\{S_r\}$ is not prime, but it is proper and for all $f,g\in K_r\{X\}$ of degree and order $\leq r$, if $fg\in \{S_r\}$ then either $f$ or $g$ is in $\{S\}$.
Passing to the ultraproduct we obtain an internal $\Delta$-field $K$ whose components are $K_r$ and an internal set $S\subset\ki$ whose components are $S_r$.
Since the degrees and orders of the elements of $S_r$ were bounded independently of $r$, $S$ is in fact a subset of $\k$.
Since the $\{S_r\}$ were not prime, $\{S\}_{\int}$ is not prime.
On the other hand, $\{S\}_{\int}\cap \k$ is prime.
Indeed, it is proper since each $\{S_r\}$ was proper and hence $\{S\}_{\int}$ omits $1\in\k$.
Moreover, if $f,g\in\k$ with $fg\in\{S\}_{\int}$, then for almost all $r$, $f(r),g(r)\in K_r\{X\}$ are of degree and order $\leq r$ (as they are almost everywhere equal to that of $f$ and $g$), and $f(r)g(r)\in \{S_r\}$, and hence one of $f(r)$ and $g(r)$ is in $\{S_r\}$.
Depending on which of these is favoured by the ultrafilter, we get that either $f$ or $g$ is in $\{S\}_{\int}$.
We have shown that $\{S\}_{\int}\cap\k$ is prime but $\{S\}_{\int}$ is not.
That is, statement (A) is false.

Conversely, suppose Conjecture~\ref{primeconjecture} holds, $K$ is an internal $\Delta$-field, and $S\subset\k$ is an internal set.
Let $(S_r:r<\omega)$ be a family of components for $S$.
The fact that $S\subset\k$ means that for some $V\in\mathcal U$ there is a bound on the orders and degrees of the elements of $S_r$ for all $r\in V$, that is independent of $r$.
Let $d$ be this bound and let $N=r(d,n,m)$ be given by Conjecture~\ref{primeconjecture}.
Now suppose that $\{S\}_{\int}$ is not prime.
We show that $\{S\}_{\int}\cap\k$ cannot be prime, thereby verifying the contrapositive of (A).
Shrinking $V$ if necessary, $\{S_r\}$ is not prime for all $r\in V$.
Hence there is $f_r,g_r\in K_r\{X\}$ of degree and order $\leq N$ witnessing the nonprimality of $\{S_r\}$.
(Note that while $S_r$ need not be finite, there exists finite $T_r\subset S_r$ with $\{S_r\}=\{T_r\}$ by the Ritt-Raudenbush basis theorem, and hence Conjecture~\ref{primeconjecture} applies.)
Let $f,g\in\ki$ be such that $f(r)=f_r$ and $g(r)=g_r$ almost everywhere.
Then, as the degrees and orders are bounded, we have that $f,g\in\k$.
Moreover, $f,g$ witness the nonprimality of $\{S\}_{\int}$.
That is, $\{S\}_{\int}\cap\k$ is not prime.
\end{proof}

\begin{remark}
\label{int=finite}
The above proposition remains true even if we change statement~(A) to consider only {\em finite} subsets $S\subset\k$.
Indeed, when we were proving above that a counterexample to Conjecture~\ref{primeconjecture} leads to a counterexample to~(A), we could have taken the $S_r$ to be of bounded size independently of $r$ without changing $\{S_r\}$.
This is because the $\Delta$-polynomials in $K_r\{X\}$ of order and degree bounded by $d$ form a $K_r$-vector space whose (finite) dimension depends only on $d$ (not on $r$).
Choosing the $S_r$ to be of uniformly bounded size would then have lead to a finite counterexample $S$.
\end{remark}
 
Unfortunately, the results we obtained in the last section only go part way toward proving statement (A).
The proof of the following weak form of (A) illustrates what goes wrong.

\begin{proposition}
\label{prop:Partial_Prime_Bound}
Suppose $K$ is an internal $\Delta$-field and $S\subset\k$ is such that $\{S\}_{\int}\subsetneq\ki$ satisfies: for all $f\in\k$ and $g\in\ki$, if $fg\in\{S\}_{\int}$ then $f\in\{S\}_{\int}$ or $g\in\{S\}_{\int}$.
Then $\{S\}_{\int}$ is prime.
\end{proposition}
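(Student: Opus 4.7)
The plan is to identify $J := \{S\}_\int$ with a specific prime internal $\Delta$-ideal of the form $I_\int[\Lambda]$, so that primality of $J$ follows directly from Corollary~\ref{cor:Containment_Above}. The natural choice of $\Lambda$ is a characteristic set for $J \cap \k$, so the first job is to show this intersection is a prime $\Delta$-ideal of $\k$.

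Setting $I := J \cap \k$, I would specialize the hypothesis to $f, g \in \k$: if $fg \in J$ then $f \in J$ or $g \in J$, whence $f \in I$ or $g \in I$. Combined with $I$ being proper and radical (inherited from $J$), this yields primality of $I$. I would then take a characteristic set $\Lambda \subset \k$ for $I$; by the description of prime $\Delta$-ideals via their characteristic sets, $I = I[\Lambda]$. Corollary~\ref{cor:Containment_Above} now delivers $I_\int[\Lambda]$ as a prime internal $\Delta$-ideal with $I_\int[\Lambda] \cap \k = I$.

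It remains to show $J = I_\int[\Lambda]$. The inclusion $J \subseteq I_\int[\Lambda]$ follows from $S \subseteq I \subseteq I_\int[\Lambda]$ together with the minimality of $J$ among radical internal $\Delta$-ideals containing $S$. For the reverse, given $g \in I_\int[\Lambda]$, there is $N \in \mathbb N^*$ with $H_\Lambda^N g \in [\Lambda]_\int$; since $\Lambda \subseteq I \subseteq J$ and $J$ is an internal $\Delta$-ideal, $[\Lambda]_\int \subseteq J$, so $H_\Lambda^N g \in J$. Now invoke the hypothesis with $f := H_\Lambda^N \in \k$: either $H_\Lambda^N \in J$ or $g \in J$. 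If the first held, then $H_\Lambda^N \in J \cap \k = I$, forcing $H_\Lambda \in I$ by primality of $I$, which contradicts the standard fact that $H_\Lambda \notin I[\Lambda]$ whenever $\Lambda$ is a characteristic set of the prime $\Delta$-ideal $I[\Lambda]$. Hence $g \in J$.

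The crucial step — and the unique place where the restricted primality hypothesis is used nontrivially — is the choice $f = H_\Lambda^N$: this $\k$-element is precisely the multiplier needed to bridge $[\Lambda]_\int$ and $I_\int[\Lambda]$. I do not expect a serious obstacle beyond pinpointing this choice, since all the heavy lifting (coherence, primality of $I_\int[\Lambda]$, the containment $I_\int[\Lambda] \cap \k = I[\Lambda]$) is already packaged in Theorem~\ref{intchar=char} and Corollary~\ref{cor:Containment_Above}.
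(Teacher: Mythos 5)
Your overall strategy matches the paper's: isolate $J:=\{S\}_\int$ as $I_\int[\Lambda]$ for a characteristic set $\Lambda$ of the prime $\Delta$-ideal $J\cap\k$, then use $H_\Lambda\notin J$ together with the restricted primality hypothesis to absorb all of $I_\int[\Lambda]$ into $J$. However, there is a genuine gap in your final step: you invoke the hypothesis with $f:=H_\Lambda^N$ and assert $H_\Lambda^N\in\k$, but $N$ ranges over $\mathbb N^*$, not $\mathbb N$. When $N$ is nonstandard, the element $H_\Lambda^N$ has $r$-th coordinate $H_\Lambda(r)^{N(r)}$ with $N(r)$ unbounded along the ultrafilter, so (unless $H_\Lambda$ happens to be a constant) its coordinatewise degrees are unbounded and it is \emph{not} an element of $\k$ under the paper's intrinsic characterisation of $\k\subset\ki$. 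The hypothesis only applies to $f\in\k$, so you cannot feed it $H_\Lambda^N$.

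The repair is short and is exactly what the paper does: from $H_\Lambda^N g\in J$ observe that $(H_\Lambda g)^N = (H_\Lambda^N g)\,g^{N-1}\in J$, and since $J=\{S\}_\int$ is componentwise radical almost everywhere, conclude $H_\Lambda g\in J$. Now apply the hypothesis with $f:=H_\Lambda\in\k$ (a genuine element of $\k$, since $\Lambda\subset\k$) to get $H_\Lambda\in J$ or $g\in J$; the first is ruled out exactly as you argue. Everything else in your write-up (primality of $J\cap\k$, reduction to $I_\int[\Lambda]$ via Theorem~\ref{intchar=char} and Corollary~\ref{cor:Containment_Above}, the forward inclusion via minimality of $\{S\}_\int$ among radical internal $\Delta$-ideals containing $S$) is sound.
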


\begin{proof}
Note that our assumption on $\{S\}_{\int}$ implies that $\{S\}_{\int}\cap \k$ is prime.\footnote{We know by Corollary~\ref{thm:Nonstandard_Nullstellensatz} that $\{S\}_{\int}\cap \k=\{S\}$, but we don't use this fact here.}
Theorem~\ref{intchar=char} tells us that if $\Lambda$ is a characteristic for $\{S\}_{\int}\cap\k$ then $I_{\int}[\Lambda]$ is prime.
We will show that $\{S\}_{\int}=I_{\int}[\Lambda]$.
Since $I_{\int}[\Lambda]$ contains $I[\Lambda]=\{S\}_{\int}\cap\k\supset S$, we have $\{S\}_{\int}\subset I_{\int}[\Lambda]$.
For the reverse containment, suppose $g\in I_{\int}[\Lambda]$.
Then $H_{\Lambda}^Ng\in [\Lambda]_{\int}\subset \{S\}_{\int}$ for some $N\in\mathbb N^*$.
It follows that $(H_{\Lambda}g)^N\in\{S\}_{\int}$.
Since $\{S\}_{\int}$ is componentwise radical almost everywhere, $H_{\Lambda}g\in \{S\}_{\int}$.
Now $H_{\Lambda}\notin\{S\}_{\int}$, else it would be in $I[\Lambda]$ and the latter would then not be proper.
Since $H_{\Lambda}\in\k$, our assumption on $\{S\}_{\int}$ implies that $g\in\{S\}_{\int}$, as desired.
\end{proof}

We leave it to the reader to deduce from the above proposition the following standard consequence which bounds one of the two universal quantifiers in the definition of primality.

\begin{theorem}
\label{prop:Partial_Prime_Bound_Standard}
For every $d$ there exists $r=r(d,n,m)$
such that for every $\Delta$-field $k$ and and every finite set of $\Delta$-polynomials $S\subset k\{ X\}$ of degree and order $\leq d$, the following are equivalent:
\begin{itemize}
\item[(i)]
$\{S\}$ is prime
\item[(ii)]
$\{S\}$ is proper and for all $f,g\in k\{X\}$ with $f$ of degree and order $\leq r$, if $fg\in \{S\}$ then either $f$ or $g$ is in $\{S\}$.
\end{itemize}
\end{theorem}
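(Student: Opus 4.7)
The plan is to derive Theorem~\ref{prop:Partial_Prime_Bound_Standard} from Proposition~\ref{prop:Partial_Prime_Bound} by a standard nonstandard/compactness argument. The direction (i)$\Rightarrow$(ii) holds trivially for any $r$: if $\{S\}$ is prime then the implication $fg\in\{S\}\Rightarrow f\in\{S\}\vee g\in\{S\}$ is available for all $f,g$, a fortiori for those with $f$ of bounded complexity. So the content is (ii)$\Rightarrow$(i), which I prove by contradiction.

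Suppose that for some fixed triple $(d,n,m)$ no bound $r(d,n,m)$ works. Then for each $r<\omega$ I may pick a $\Delta$-field $k_r$ and a finite set $S_r\subset k_r\{X\}$ whose elements have degree and order $\leq d$, such that $\{S_r\}$ is proper, the bounded-primality clause of~(ii) holds for this particular $r$, and yet $\{S_r\}$ is not prime. Using the nonprincipal ultrafilter $\mathcal U$ on $\omega$ fixed at the start of the section, form $K=\prod_\mathcal U k_r$ and let $S\subset\ki$ be the internal set with components $S_r$. Since every element of every $S_r$ has degree and order uniformly bounded by $d$, the intrinsic characterisation of $\k$ inside $\ki$ from Section~\ref{k-ki-sect} places $S\subset\k$.

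I now verify the hypotheses of Proposition~\ref{prop:Partial_Prime_Bound} for this $S$. First, $\{S\}_{\int}\subsetneq\ki$ because $1\notin\{S_r\}$ for every $r$. Second, the bounded-primality hypothesis: given $f\in\k$ and $g\in\ki$ with $fg\in\{S\}_{\int}$, let $e$ be a uniform degree/order bound for $f$ (which exists because $f\in\k$). On a set of $r$ in $\mathcal U$ we have simultaneously $r\geq e$, $f(r)$ of degree and order $\leq e\leq r$, and $f(r)g(r)\in\{S_r\}$; the assumed property of~$S_r$ then forces $f(r)\in\{S_r\}$ or $g(r)\in\{S_r\}$, and an ultrafilter split puts one of $f,g$ in $\{S\}_{\int}$. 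Hence Proposition~\ref{prop:Partial_Prime_Bound} delivers the primality of $\{S\}_{\int}$. On the other hand, for each $r$ one can select witnesses $f_r,g_r\in k_r\{X\}$ to the non-primality of $\{S_r\}$ and form the corresponding internal $f,g\in\ki$: these satisfy $fg\in\{S\}_{\int}$ componentwise while $f,g\notin\{S\}_{\int}$, contradicting primality of $\{S\}_{\int}$.

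There is no real obstacle here beyond bookkeeping; the key observation---and essentially the only thing that makes the argument work---is that along the sequence of hypothesised counterexamples the bound $r$ tends to infinity, so any fixed standard degree/order bound on $f\in\k$ is eventually dominated by~$r$. This is exactly what allows the bounded-primality property of each $S_r$ to be applied to $f(r)$, converting the partial primality of Proposition~\ref{prop:Partial_Prime_Bound} into the uniform standard bound of Theorem~\ref{prop:Partial_Prime_Bound_Standard}.
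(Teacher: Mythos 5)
Your proof is correct and is exactly the deduction the paper has in mind: the paper explicitly leaves Theorem~\ref{prop:Partial_Prime_Bound_Standard} ``to the reader'' as a standard consequence of Proposition~\ref{prop:Partial_Prime_Bound}, and your contrapositive/ultraproduct argument mirrors precisely the pattern already carried out in Proposition~\ref{primeconjecture=int}. All the places that require care are handled: you use nonprincipality of $\mathcal U$ so that $r\geq e$ holds almost everywhere, you note that $\{S\}_{\int}\neq\ki$ since $1\notin\{S_r\}$ for every~$r$, and you correctly observe that non-primality of $\{S\}_{\int}$ is equivalent to componentwise non-primality almost everywhere, which yields the internal witnesses $f,g$.
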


\medskip

Now we turn our attention away from trying to prove definability of primality, and instead toward proving that it is equivalent to other existence-of-bounds problems in differential algebra.
In this we are informed by~\cite{RittProbAlgorithms} where Golubitsky, Kondratieva, and Ovchinnikov show, under some natural computability-theoretic assumptions on the base $\Delta$-field\footnote{Namely that $k$ is a computable $\Delta$-field and that there is an algorithm for determining if a univariate polynomial over $k$ is irreducible.}, the equivalence of several computational problems in differential algebra.
We are interested in the existence-of-bounds analogues of some of their existence-of-algorithm problems.

\begin{proposition}
\label{thm:Nonstandard_Equiv}
Statement~{\em (A)} of Proposition~\ref{primeconjecture=int} is equivalent to each of the following statements: For any internal $\Delta$-field $K$,
\begin{itemize}
\item[(B)]
If $P\subset\ki$ is a prime internal $\Delta$-ideal with an internally characteristic set from $\k$ then $P=\{S\}_{\int}$ for some internal $S\subset\k$.
\item[(C)]
Suppose $P,Q\subset\ki$ are prime internal $\Delta$-ideals with internally characteristic sets from $\k$.
If $P\cap\k\subset Q$ then $P\subset Q$.
\item[(D)]
Suppose $S\subset\k$ is internal and $P, Q\subset\ki$ are minimal prime internal $\Delta$-ideals containing $S$.
If $P\cap\k\subset Q$ then $P=Q$.
\item[(E)]
If $S\subset\k$ is internal and $f\in\k$ is not a zero divisor in $\k/\{S\}$ then $f$ is not a zero divisor in $\ki/\{S\}_{\int}$.
\end{itemize}
\end{proposition}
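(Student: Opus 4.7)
The plan is to close the cycle $(A) \Rightarrow (B) \Rightarrow (C) \Rightarrow (D) \Rightarrow (E) \Rightarrow (A)$, with the main engines being Corollary~\ref{cor:Containment_Above} (which makes $I_{\int}[\Lambda]$ the least prime internal $\Delta$-ideal of $\ki$ with contraction $I[\Lambda]$), the Ritt-Raudenbush basis theorem in $\k$, and the decomposition $\{S\} = P_1 \cap \cdots \cap P_n$ of $\{S\}$ into minimal primes in $\k$ with characteristic sets $\Lambda_i \subset \k$. A preliminary observation used repeatedly is that each $I_{\int}[\Lambda_i]$ is in fact a \emph{minimal} internal prime over $S$: any prime internal $\Delta$-ideal $R$ with $S \subset R \subset I_{\int}[\Lambda_i]$ has contraction $R \cap \k$ squeezed between $\{S\}$ and $P_i$, forcing $R \cap \k = P_i$ by minimality in $\k$, whence Corollary~\ref{cor:Containment_Above} gives $R = I_{\int}[\Lambda_i]$.

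For $(A) \Rightarrow (B)$, given prime $P = I_{\int}[\Lambda]$ with $\Lambda \subset \k$, I would use Ritt-Raudenbush to choose finite $T \subset \k$ with $\{T\} = I[\Lambda]$; then $\{T\}_{\int} \cap \k = \{T\}$ is prime, so (A) yields $\{T\}_{\int}$ prime. The equality $P = \{T\}_{\int}$ then follows from two inclusions: $T \subset P$ gives $\{T\}_{\int} \subset P$, while $\Lambda \subset \{T\} \subset \{T\}_{\int}$ gives $[\Lambda]_{\int} \subset \{T\}_{\int}$, so any $g \in I_{\int}[\Lambda]$ satisfies $H_{\Lambda}^N g \in \{T\}_{\int}$, whereupon primality together with $H_{\Lambda} \notin \{T\} = \{T\}_{\int} \cap \k$ forces $g \in \{T\}_{\int}$. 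Then $(B) \Rightarrow (C)$ is immediate: if $P = \{T\}_{\int}$ with $T \subset \k$, then $T \subset P \cap \k \subset Q$ forces $\{T\}_{\int} \subset Q$. And $(E) \Rightarrow (A)$ reduces directly to Proposition~\ref{prop:Partial_Prime_Bound}: primeness of $\{S\}_{\int} \cap \k = \{S\}$ makes every $f \in \k \setminus \{S\}$ a non-zero-divisor mod $\{S\}$, which (E) promotes to being a non-zero-divisor mod $\{S\}_{\int}$, exactly the hypothesis of that proposition.

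The main obstacle is $(C) \Rightarrow (D)$, which hinges on the lemma that under (C), every minimal internal prime $P$ over an internal $S \subset \k$ has the form $I_{\int}[\Lambda_i]$ for some $i$. The decisive trick is to introduce an intermediate: since $P \cap \k$ is a prime of $\k$ containing $\{S\}$ it contains some $P_i$, and I let $\Gamma \subset \k$ be a characteristic set of the prime $P \cap \k$ itself. Then Corollary~\ref{cor:Containment_Above} provides $I_{\int}[\Gamma] \subset P$, and (C) applied to the pair $I_{\int}[\Lambda_i]$ and $I_{\int}[\Gamma]$ — both of which have internally characteristic sets from $\k$, and with the contraction inclusion $P_i \subset P \cap \k$ — yields $I_{\int}[\Lambda_i] \subset I_{\int}[\Gamma] \subset P$; the minimality of $P$ then forces $P = I_{\int}[\Lambda_i]$. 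From this, $(C) \Rightarrow (D)$ is immediate: writing $P = I_{\int}[\Lambda_i]$ and $Q = I_{\int}[\Lambda_k]$, the assumption $P \cap \k \subset Q$ reads $P_i \subset P_k$, so $P_i = P_k$ by minimality in $\k$ and hence $P = Q$.

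Finally, $(D) \Rightarrow (E)$ is by contradiction. Assume $f \in \k$ is a non-zero-divisor mod $\{S\}$ but $fg \in \{S\}_{\int}$ with $g \notin \{S\}_{\int}$. Componentwise, choose for each $r$ a minimal prime $Q_r$ of $\{S_r\}$ with $g(r) \notin Q_r$ (so that $f(r) \in Q_r$); then $Q := \prod_{\mathcal U} Q_r$ is a minimal internal prime over $S$ containing $f$. Since $f$ lies in no $P_j$, and $Q \cap \k$ (being prime and containing $\bigcap_j P_j$) contains some $P_j$, we get $P_j \subsetneq Q \cap \k$. Setting $P = I_{\int}[\Lambda_j]$, which is a minimal internal prime over $S$ by the preliminary observation, we have $P \cap \k = P_j \subsetneq Q \cap \k \subset Q$, so (D) forces $P = Q$ — contradicting $P \cap \k \ne Q \cap \k$.
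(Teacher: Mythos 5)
Your proof is correct and follows essentially the same cycle $(A)\Rightarrow(B)\Rightarrow(C)\Rightarrow(D)\Rightarrow(E)\Rightarrow(A)$ with the same main tools (Corollary~\ref{cor:Containment_Above}, Corollary~\ref{thm:Nonstandard_Nullstellensatz}, Proposition~\ref{prop:Partial_Prime_Bound}, the prime decomposition in $\k$, and the zero-divisor characterisation); the minor differences are stylistic (e.g.\ reproving a containment by hand in $(A)\Rightarrow(B)$ rather than quoting the corollary, and building $Q$ by an explicit componentwise choice in $(D)\Rightarrow(E)$). One small inefficiency worth noting: in your argument for $(C)\Rightarrow(D)$ you deploy (C) to identify a minimal internal prime $P$ over $S$ with some $I_{\int}[\Lambda_i]$, but this is more than you need and (C) is not actually required at that point --- once Corollary~\ref{cor:Containment_Above} gives $I_{\int}[\Gamma]\subset P$ for $\Gamma$ characteristic for $P\cap\k$, the minimality of $P$ and $S\subset I[\Gamma]\subset I_{\int}[\Gamma]$ already force $P=I_{\int}[\Gamma]$ unconditionally, so every minimal internal prime over $S$ has an internally characteristic set from $\k$ (this is the paper's Lemma~\ref{minintchar}); applying (C) to $P$ and $Q$ then gives $P\subset Q$ and minimality finishes.
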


\begin{proof}
Suppose (A) holds and let $\Lambda\subset\k$ be internally characteristic for a prime internal $\Delta$-ideal $P$.
Then $P=I_{\int}[\Lambda]$, and by Theorem~\ref{intchar=char}, $\Lambda$ is characteristic for the prime $\Delta$-ideal $I[\Lambda]$ of $\k$.
Let $S\subset\k$ be finite so that $I[\Lambda]=\{S\}$.
By Corollary~\ref{cor:Containment_Above}, $P$ is least among the prime internal $\Delta$-ideals that lie above $\{S\}$.
By Corollary~\ref{thm:Nonstandard_Nullstellensatz}, $\{S\}_{\int}\cap\k=\{S\}$, and by~(A), $\{S\}_{\int}$ is prime.
Hence $P\subset\{S\}_{\int}$.
The reverse containment is clear.
This proves~(B).

That~(B) implies~(C) is immediate.

Note that~(D) does not follow tautologically from~(C), because the prime internal $\Delta$-ideals $P$ and $Q$ appearing in~(D) are not assumed to have internally characteristic sets from $\k$.
However, by the following lemma, $P$ and $Q$ {\em do} in fact have internally characteristic sets from $\k$, and so~(C) does imply~(D).

\begin{lemma}
\label{minintchar}
If $S\subset\k$ and $P\subset\ki$ is a minimal prime internal $\Delta$-ideal containing $S$, then $P$ has an internally characteristic set from $\k$.
\end{lemma}
\begin{proof}[Proof of Lemma~\ref{minintchar}]
Consider the prime $\Delta$-ideal $Q:=P\cap\k$, and let $\Lambda\subset\k$ be a characteristic set for $Q$.
By Theorem~\ref{intchar=char}, $\Lambda$ is internally characteristic for the prime internal  $\Delta$-ideal $I_{\int}[\Lambda]$.
We show that $P=I_{\int}[\Lambda]$.
By Corollary~\ref{cor:Containment_Above}, $I_{\int}[\Lambda]$ is least among the prime internal $\Delta$-ideals of $\ki$ whose intersection with $\k$ is $Q$.
Hence $I_{\int}[\Lambda]\subset P$.
On the other hand, $S\subset P\cap\k=Q\subset I_{\int}[\Lambda]$.
So by the minimality of $P$, $P=I_{\int}[\Lambda]$, as desired.
\end{proof}

In order to prove that~(D) implies~(E), we first show, unconditionally, that if $S\subset\k$ is internal and $\Lambda\subset\k$ is characteristic for a minimal prime $\Delta$-ideal containing $S$, then $I_{\int}[\Lambda]$ is a minimal prime internal $\Delta$-ideal of $\ki$ containing $S$.
Indeed, by Theorem~\ref{intchar=char}, $I_{\int}[\Lambda]$ is a prime internal $\Delta$-ideal containing $S$, and hence by transfer from the standard setting, there exists $P\subset I_{\int}[\Lambda]$ minimal such.
But then, by Lemma~\ref{minintchar}, $P=I_{\int}[\Sigma]$ where $\Sigma\subset\k$ is internally characteristic for $P$.
Intersecting with $\k$, we get $S\subset I[\Sigma]\subset I[\Lambda]$, and hence $I[\Sigma]=I[\Lambda]$ by minimality.
Therefore $P=I_{\int}[\Sigma]=I_{\int}[\Lambda]$, as desired.

Now we show that~(D) implies:
\begin{itemize}
\item[(D$^\prime$)]
If $Q\subset\ki$ is a minimal prime internal $\Delta$-ideal containing $S$, then $Q\cap\k$ is a minimal prime $\Delta$-ideal of $\k$ containing $S$.
\end{itemize}
Indeed, let $Q'\subset Q\cap\k$ be a minimal prime $\Delta$-ideal containing $S$.
Let $\Lambda\subset\k$ be characteristic for $Q'$, so that $Q'=I[\Lambda]$.
Then, by the previous paragraph, $P:=I_{\int}[\Lambda]$ is another minimal prime internal $\Delta$-ideal of $\ki$ containing $S$.
As $P\cap\k=I[\Lambda]\subset Q$, (D) implies that $P=Q$.
So, $Q\cap\k=I[\Lambda]=Q'$

We are ready to prove that~(D) implies~(E).
We use the following characterisation of zero divisors that comes from the prime decomposition theorem (see, for example, Lemma~1 of~\cite{RittProbAlgorithms}): $f\in\k$ is a zero divisor modulo $\{S\}$ if an only if $f$ belongs to one of, but not all of, the minimal prime $\Delta$-ideals of $\k$ that contain $S$.
Working componentwise almost everywhere we also have: $f\in\ki$ is a zero divisor in $\ki/\{S\}_{\int}$ if an only if $f$ belongs to one of, but not all of, the minimal prime internal $\Delta$-ideals of $\ki$ that contain $S$.
Now, toward a proof of the contrapositive of~(E), suppose that $f\in\k$ is a zero divisor in $\ki/\{S\}_{\int}$, and let $P$ and $Q$ be minimal prime internal $\Delta$-ideals containing $S$ such that $f\in P\setminus Q$.
But then $f\in\big(P\cap\k\big)\setminus\big(Q\cap\k\big)$.
As $P\cap\k$ and $Q\cap\k$ are minimal prime internal $\Delta$-ideals containing $S$ by~(D$'$), it follows that $f$ is a zero divisor of $\k/\{S\}$.

Finally, we show that~(E) implies~(A).
Assuming~(E), suppose that $S\subset\k$ is an internal set such that $\{S\}_{\int}$ is not prime in $\ki$.
We need to prove that $\{S\}_{\int}\cap\k$ is not prime in $\k$.
By Corollary~\ref{thm:Nonstandard_Nullstellensatz}, $\{S\}_{\int}\cap\k=\{S\}$.
Now by Proposition~\ref{prop:Partial_Prime_Bound}, the fact that $\{S\}_{\int}$ is not prime is witnessed by some $f\in\k$ and $g\in\ki$ neither of which are in $\{S\}_{\int}$ but such that $fg\in\{S\}_{\int}$.
That is, $f$ is a zero divisor in $\ki/\{S\}_{\int}$.
As $f\in \k$, (E) implies that $f$ is a zero divisor in $\k/\{S\}$.
Hence $\{S\}$ is not prime in $\k$, as desired.
\end{proof}

\begin{theorem}
\label{primeconjecture-equivalents}
The following statements in differential algebra are equivalent:
\begin{itemize}
\item[(1)]
{\em Definability of primality.}\\
Conjecture~\ref{primeconjecture}.
\item[(2)]
{\em Bounds for generators of a prime $\Delta$-ideal given a characteristic set.}\\
For every $d$ there exists $r=r(d,n,m)$ such that for every $\Delta$-field $k$, if $P$ is a prime $\Delta$-ideal of $k\{X\}$ with a characteristic set whose elements are of degree and order $\leq d$, then $P$ is radically differentially generated by $\Delta$-polynomials of order and degree $\leq r$.
\item[(3)]
{\em Bounds for checking $\subset$ among prime $\Delta$-ideals given characteristic sets.}\\
For every $d$ there exists $r=r(d,n,m)$ such that for every $\Delta$-field $k$ and all prime $\Delta$-ideals $P,Q\subset k\{X\}$ with characteristic sets whose elements are of degree and order $\leq d$, if every $\Delta$-polynomial in $P$ of degree and order $\leq r$ is in $Q$ then $P\subset Q$.
\item[(4)]
{\em Bounds for distinguishing minimal prime $\Delta$-ideals.}\\
For every $d$ there exists $r=r(d,n,m)$ such that for every $\Delta$-field $k$, every set $S\subset k\{X\}$ of $\Delta$-polynomials of degree and order $\leq d$, and every pair $P$ and $Q$ of minimal prime $\Delta$-ideals containing $S$, if every $\Delta$-polynomial in $P$ of degree and order $\leq r$ is in $Q$ then $P=Q$.
\item[(5)]
{\em Definability of zero-divisibility.}\\
For every $d$ there exists $r=r(d,n,m)$ such that for every $\Delta$-field $k$, every set $S\subset k\{X\}$ of $\Delta$-polynomials of degree and order $\leq d$, and every $g\in k\{X\}$ of degree and order $\leq d$, if $gf\notin\{S\}$ for all $f\notin\{S\}$ of degree and order $\leq r$, then $g$ is not a zero divisor modulo $\{S\}$.
\end{itemize}
\end{theorem}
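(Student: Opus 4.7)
The plan is to use Proposition~\ref{thm:Nonstandard_Equiv} as the central bridge: statements $(A)$ through $(E)$ there are already pairwise equivalent. The equivalence of $(1)$ with $(A)$ is precisely Proposition~\ref{primeconjecture=int}, so it suffices to prove $(k) \Leftrightarrow (X)$ for each of the pairs $(2, B)$, $(3, C)$, $(4, D)$, $(5, E)$. All four of these equivalences should follow the same ultraproduct template that establishes $(1) \Leftrightarrow (A)$; what varies is the shape of the witnessing data.

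For the direction ``failure of the standard statement $\Rightarrow$ failure of the nonstandard statement'' we proceed as in Proposition~\ref{primeconjecture=int}: a failure of boundedness at each $r<\omega$ supplies, for each $r$, a $\Delta$-field $K_r$ together with a witness whose intrinsic data (characteristic sets, a generating set $S$, a polynomial $g$, etc.) has degree and order bounded by a fixed $d$. Taking the ultraproduct $K:=\prod_{\mathcal{U}}K_r$, these witnesses assemble into internal objects that lie in $\k$, since membership in $\k$ is exactly uniform boundedness of degree and order. A \L o\'s-style argument then transfers the failure componentwise to the ultraproduct. In $(2)$, one uses the fact that autoreduced sets with elements of degree and order $\leq d$ have uniformly bounded cardinality (their leaders are distinct and lie among the finitely many algebraic indeterminates of order $\leq d$), so the ultraproduct characteristic sets are finite subsets of $\k$; moreover, no internal $S\subset\k$ can satisfy $\{S\}_{\int}=P$, because any such $S$ would have uniform degree/order bound $e$, contradicting the ungeneratability of $P_r$ by polynomials of degree/order $\leq r$ once $r>e$. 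The corresponding translations for $(3),(4),(5)$ are essentially the same, with the witnessing data being pairs of prime ideals (with their characteristic sets or a common generating set) or a single $\Delta$-polynomial $g$ together with the accompanying bounded non-zero-divisibility certificate.

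The reverse direction ``nonstandard $\Rightarrow$ standard'' is again a routine \L o\'s/ultraproduct argument: given an internal counterexample to the nonstandard statement with data of uniform degree/order bound $d$, one extracts component-witnesses almost everywhere, applies the standard statement with the bound $r=r(d,n,m)$ in almost every coordinate, and reassembles to obtain an internal contradiction. For $(3)$ and $(4)$ one additionally uses that the hypothesis ``every $\Delta$-polynomial in $P$ of degree/order $\leq r$ lies in $Q$'' transfers from the assumption $P\cap\k\subset Q$ to almost every coordinate, via \L o\'s applied to the finite-dimensional $K_r$-vector space of $\Delta$-polynomials of degree/order $\leq r$. I do not expect serious obstacles; the main care required is keeping track of which ultraproduct data is automatically finite and which could a priori be infinite, but the uniform boundedness of autoreduced sets under a fixed degree/order bound resolves this in each case. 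Once $(k)\Leftrightarrow(X)$ is established for each of the four remaining pairs, the theorem follows by chaining through Proposition~\ref{thm:Nonstandard_Equiv}.
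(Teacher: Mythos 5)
Your proposal is correct and follows precisely the route the paper takes: the paper also reduces Theorem~\ref{primeconjecture-equivalents} to Proposition~\ref{thm:Nonstandard_Equiv} by observing that (1) through (5) are the standard formulations of (A) through (E), citing Proposition~\ref{primeconjecture=int} for the pair $(1,\mathrm{A})$ and asserting that the remaining four pairs follow by ``very similar arguments.'' Your sketch of those four transfer arguments, including the uniform cardinality bound for autoreduced sets of bounded degree and order and the use of the finite-dimensional space of bounded $\Delta$-polynomials to pass hypotheses componentwise, is exactly the kind of routine ultraproduct bookkeeping the paper leaves to the reader.
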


\begin{proof}
These statements are just standard formulations of the corresponding statements~(A) through~(E) of Proposition~\ref{thm:Nonstandard_Equiv}.
The equivalence of~(A) and~(1) is Proposition~\ref{primeconjecture=int}, and very similar arguments yield the equivalence of~(B) to~(2), (C) to~(3), (D) to~(4), and (E) to~(5).
The theorem then follows immediately from Proposition~\ref{thm:Nonstandard_Equiv}.
\end{proof}

\begin{remark}
The existence-of-algorithms analogue of~(3) would say that there is an algorithm for deciding whether one prime $\Delta$-ideal is contained in another given their characteristic sets.
This is Kolchin's Problem~3 of $\S$IV.9 of~\cite{Kolchin}, a solution to which would solve the Ritt problem of computing the prime components of the radical $\Delta$-ideal generated by a given finite set of $\Delta$-polynomials.
\end{remark}

\bigskip
\section{Two standard consequences}

\noindent
We conclude by extracting the existence of some bounds in the theory of $\Delta$-polynomial rings from our understanding of the ring of internal $\Delta$-polynomials.

Throughout  $\Delta=\{\delta_1,\dots,\delta_m\}$ and $X=(X_1,\dots,X_n)$.

\medskip
\subsection{Characteristic sets for minimal prime $\Delta$-ideals.}
Given a finite set of $\Delta$-polynomials $S$, consider the problem of finding the minimal prime $\Delta$-ideals that contain $S$.
In constructive differential algebra there has been significant work on bounds in differential elimination algorithms from which it seems that explicit bounds on characteristic sets for the minimal prime $\Delta$-ideals containing $S$ should be deducible.
We have in mind~\cite{rgalg} and~\cite{DiffNull}.
Here we show the {\em existence} of such bounds as an immediate consequence of our study of $\ki$.

\begin{theorem}
\label{pro:CharSetBound}
Suppose $d\in\mathbb{N}$.
There
is $r=r(d,n,m)\in\mathbb{N}$ such that for every
$\Delta$-field $k$ and every finite set of $\Delta$-polynomials $S\subset k\{ X\}$ of degree and order $\leq d$, each minimal
prime $\Delta$-ideal containing $S$ has a characteristic set of $\Delta$-polynomials of degree and order $\leq r$.
\end{theorem}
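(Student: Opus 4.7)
The plan is to argue by contradiction using the ultrapower machinery developed above. Assume the claimed bound fails for some fixed $d,n,m$. Then for each $r<\omega$ we may choose a $\Delta$-field $K_r$, a finite set $S_r\subset K_r\{X\}$ of degree and order $\leq d$, and a minimal prime $\Delta$-ideal $P_r\subset K_r\{X\}$ containing $S_r$ such that no characteristic set of $P_r$ has all of its elements of degree and order $\leq r$. As in Remark~\ref{int=finite}, the $K_r$-vector space of $\Delta$-polynomials of degree and order $\leq d$ has dimension depending only on $d,n,m$, so we may arrange the $|S_r|$ to be uniformly bounded; then $S:=\prod_{\mathcal U}S_r$ is a genuine finite subset of $\k$, where $K:=\prod_{\mathcal U}K_r$.

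Next I would verify that the internal set $P\subset\ki$ with components $(P_r:r<\omega)$ is a minimal prime internal $\Delta$-ideal containing $S$. Containment, being a $\Delta$-ideal, and primality all hold componentwise, hence internally. For minimality, if $Q\subset P$ is a prime internal $\Delta$-ideal containing $S$, then for almost all $r$ we have $S_r\subset Q_r\subset P_r$ with $Q_r$ a prime $\Delta$-ideal of $K_r\{X\}$; the minimality of $P_r$ forces $Q_r=P_r$ almost everywhere, so $Q=P$.

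Lemma~\ref{minintchar} now supplies an internally characteristic set $\Lambda\subset\k$ for $P$. Since $\Lambda$ is, by Theorem~\ref{intchar=char}, an ordinary characteristic set for the prime $\Delta$-ideal $P\cap\k=I[\Lambda]$ of $\k$, $\Lambda$ is autoreduced and hence finite; let $r_0$ bound the degrees and orders of its finitely many elements. Because every $f\in\k$ has the same shape as almost all of its components (see the discussion on page~\pageref{intchar-char}), there is a set $V\in\mathcal U$ on which each $f(r)$ has degree and order matching $f$ and on which $\Lambda_r:=\{f(r):f\in\Lambda\}$ is a characteristic set for $P_r$. Choosing any $r>r_0$ from $V$ then produces a characteristic set of $P_r$ whose elements have degree and order $\leq r_0<r$, contradicting the choice of $P_r$.

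The essential ingredient, and the reason the earlier sections were needed, is the assertion that $P$ admits an internally characteristic set drawn from the small ring $\k$ rather than from all of $\ki$; this is precisely what Theorem~\ref{intchar=char} combined with Lemma~\ref{minintchar} delivers. Once those tools are in hand, the rest is straightforward componentwise bookkeeping and I do not foresee any further obstacle.
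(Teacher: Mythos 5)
Your proof is correct and follows essentially the same approach as the paper: derive a contradiction by passing to the ultraproduct, observe that the internal ideal $P$ is a minimal prime internal $\Delta$-ideal containing $S\subset\k$, invoke Lemma~\ref{minintchar} (via Theorem~\ref{intchar=char}) to obtain an internally characteristic set $\Lambda\subset\k$ for $P$, and then use the finiteness of $\Lambda$ and the shape-preservation of elements of $\k$ under projection to components to contradict the assumed unboundedness. The only minor departures are cosmetic: you arrange $S$ to be literally finite via Remark~\ref{int=finite}, whereas the paper only needs $S$ to be an internal subset of $\k$ (which is all Lemma~\ref{minintchar} requires), and you spell out the verification that $P$ is minimal whereas the paper asserts it; neither affects correctness.
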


\begin{proof}
Suppose not.
Then for each $r\in\mathbb{N}$ there is a $\Delta$-field $K_r$, a finite set $S_r\subset K_r\{X\}$ of $\Delta$-polynomials of degree and order bounded by $d$, and a  minimal prime $\Delta$-ideal $P_r\subset K_r\{X\}$ containing $S_r$ all of whose characteristic sets contain at least one element of either degree or order strictly bigger than $r$.

Fix a non-principal ultrafilter $\mathcal U$ on $\omega$, and work in the ultraproduct $\ki$.
Let $S\subset\ki$ be the internal set with components $S_r$, and let $P\subset\ki$ be the prime internal $\Delta$-ideal with components $P_r$.
Then $P$ is a minimal prime internal $\Delta$-ideal containing $S$.
As the degrees and orders of the elements of $S_r$ were bounded independently of $r$, we actually have that $S\subset\k$.
By Lemma~\ref{minintchar}, $P$ has an internally characteristic set from $\k$, say $\Lambda\subset\k$.
It follows that for almost all $r$, $\Lambda_r:=\{f(r):f\in\Lambda\}$ is characteristic for $P_r$.
Since for $f\in\k$, $f$ and $f(r)$ have the same shape, if $t$ is the maximum of the degrees and orders of the elements of $\Lambda$, then $t$ is also the maximum of the degrees and orders of the elements of $\Lambda_r$.
Letting $r\geq t$, this contradicts the fact that $P_r$ has no characteristic set bounded by $r$  in this way.
\end{proof}

When $\{S\}$ itself is prime we get from the above theorem the existence of a bound for a characteristic set of $\{S\}$.
In this case an explicit bound (at least for orders) is given by Lemma~14 of~\cite{DiffNull}.

\begin{corollary}
There
is $r=r(d,n,m)\in\mathbb{N}$ such that for every
$\Delta$-field $k$ and every prime $\Delta$-ideal $P=\{S\}$, where $S$ is a finite set of $\Delta$-polynomials of degree and order $\leq d$,
$P$ has a characteristic set of $\Delta$-polynomials of degree and order $\leq r$.
\end{corollary}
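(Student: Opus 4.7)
The corollary follows essentially immediately from Theorem~\ref{pro:CharSetBound}; the plan is to verify that under the extra hypothesis that $\{S\}$ is itself prime, the ideal $\{S\}$ is (the unique) minimal prime $\Delta$-ideal containing~$S$, and then quote the bound provided by the theorem.

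More precisely, let $r = r(d,n,m)$ be the constant produced by Theorem~\ref{pro:CharSetBound} for the given $d$, $n$, $m$. Fix a $\Delta$-field $k$ and a finite set $S\subset k\{X\}$ of degree and order $\leq d$ with $P := \{S\}$ prime. Any prime $\Delta$-ideal containing $S$ automatically contains the radical $\Delta$-ideal $\{S\} = P$, so $P$ itself is the only, and hence certainly a minimal, prime $\Delta$-ideal containing~$S$. Theorem~\ref{pro:CharSetBound} therefore yields a characteristic set for~$P$ consisting of $\Delta$-polynomials of degree and order $\leq r$, which is exactly what the corollary claims.

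There is no real obstacle here: the only point worth checking is the tautological observation that a prime radical $\Delta$-ideal generated by $S$ is minimal among primes over~$S$, and this is immediate from the definition of $\{S\}$ as the intersection of all prime $\Delta$-ideals containing~$S$.
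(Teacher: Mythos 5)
Your argument is correct and is exactly the implicit proof the paper has in mind: when $\{S\}$ is prime it is the unique (hence minimal) prime $\Delta$-ideal over $S$, so Theorem~\ref{pro:CharSetBound} applies directly with the same bound $r(d,n,m)$. The paper merely states this as an immediate consequence without writing out the proof, and your write-up fills in exactly that observation.
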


\medskip
\subsection{Differential Nullstellensatz}
Consider finally the problem of determining if a $\Delta$-polynomial $g$ is in the radical $\Delta$-ideal generated by a finite set of $\Delta$-polynomials~$S$.
An explicit bound on how many derivatives one has to apply to $S$ to witness this membership, depending only on the orders and degrees of the elements of $S\cup\{g\}$, was found in \cite{DiffNull}.
Again, the mere {\em existence} of such a bound can be seen as an immediate corollary of our study of $\ki$, as we now point out.
Another proof of the existence of a bound, also using ultraproducts (though of differentially closed fields), was given by Singer (cf. $\S 6$ of~\cite{DiffNull}).
However Singer's bound depends also on the size of $S$.

\begin{theorem}
\label{thm:Effective_Nullstellensatz}
Suppose $d\in\mathbb N$.
There
is $r=r(d,n,m)\in\mathbb{N}$ such that for every
$\Delta$-field $k$ and every finite set of $\Delta$-polynomials $S\subset k\{ X\}$ of degree and order $\leq d$, the following holds:

if $g\in \{S\}$ is of order and degree $\leq d$
then $g^r\in\big(\theta f\mid f\in S,\theta\in\Theta,\ord\theta\leq r\big)$.
\end{theorem}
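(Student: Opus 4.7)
The plan is to argue by contradiction using an ultraproduct, following the template of van den Dries--Schmidt that the paper has been developing. Suppose no bound $r=r(d,n,m)$ works for the given $d$. Then for each $s\in\omega$ there is a counterexample: a $\Delta$-field $K_s$, a finite set $S_s\subset K_s\{X\}$ of degree and order $\leq d$, and a $\Delta$-polynomial $g_s\in\{S_s\}$ of degree and order $\leq d$ such that
\[
g_s^s\notin\bigl(\theta f\mid f\in S_s,\ \theta\in\Theta,\ \ord\theta\leq s\bigr).
\]
Since the $K_s$-vector space of $\Delta$-polynomials of degree and order $\leq d$ has dimension depending only on $d,n,m$, the same observation used in Remark~\ref{int=finite} lets us also arrange $|S_s|$ to be uniformly bounded in $s$.

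Fix a nonprincipal ultrafilter $\mathcal U$ on $\omega$ and pass to the ultraproduct. Let $K=\prod_{\mathcal U}K_s$, let $S\subset\ki$ be the internal set with components $S_s$, and let $g\in\ki$ have $s$-th coordinate $g_s$. The uniform bounds on degrees, orders, and sizes guarantee that $g\in\k$ and that $S$ is a finite subset of $\k$. Because $g_s\in\{S_s\}$ for every $s$, by transfer $g\in\{S\}_{\operatorname{int}}$. Now invoke Corollary~\ref{thm:Nonstandard_Nullstellensatz}: $\{S\}_{\operatorname{int}}\cap\k=\{S\}$, so in fact $g\in\{S\}$ already in the standard ring $\k$.

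The rest is descent. Since $g\in\{S\}$, there is a standard $t\in\mathbb N$ with $g^t\in[S]$, hence an identity
\[
g^t=\sum_{i=1}^{N}h_i\,\theta_i f_i
\]
in $\k$ with $f_i\in S$, $h_i\in\k$, and $\theta_i\in\Theta$. Let $\ell=\max_i\ord\theta_i$ and $R=\max(t,\ell)$; both are fixed standard naturals. Reading the identity componentwise, for almost every $s$ one has $g_s^{t}\in(\theta f\mid f\in S_s,\ \theta\in\Theta,\ \ord\theta\leq\ell)$ in $K_s\{X\}$, and therefore $g_s^{s}=g_s^{s-t}\cdot g_s^{t}\in(\theta f\mid f\in S_s,\ \theta\in\Theta,\ \ord\theta\leq s)$ for every $s\geq R$. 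This contradicts the choice of $(K_s,S_s,g_s)$ for all sufficiently large $s$, yielding the desired uniform $r$.

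The argument is almost entirely routine once Corollary~\ref{thm:Nonstandard_Nullstellensatz} is available; that corollary does the real work, translating the ``almost everywhere'' membership $g\in\{S\}_{\operatorname{int}}$ into honest membership in the standard radical $\Delta$-ideal, where finite generation produces the uniform exponent $t$ and derivative-order bound $\ell$. The only subtlety is ensuring that $S$ lands inside $\k$ after ultraproduct, which requires the uniform bound on $|S_s|$ in addition to degrees and orders, handled just as in Remark~\ref{int=finite}.
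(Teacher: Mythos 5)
Your proof is correct and follows essentially the same route as the paper's: ultraproduct of purported counterexamples, Corollary~\ref{thm:Nonstandard_Nullstellensatz} to get $g\in\{S\}$ in $\k$, and then a finite witness $g^t\in\big(\theta f_i\mid \ord\theta\leq t\big)$ read back coordinatewise for a contradiction. One small misstatement: the uniform bound on $|S_s|$ is not what makes $S$ land in $\k$ --- the degree/order bound alone suffices, since any $f\in S$ has $f(s)\in S_s$ almost everywhere and hence bounded degree and order; the paper keeps $S$ as a possibly infinite internal subset of $\k$, which is fine because Corollary~\ref{thm:Nonstandard_Nullstellensatz} applies to arbitrary $S\subset\k$ and $g\in\{S\}$ is witnessed by finitely many $f_1,\dots,f_\ell\in S$. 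Your extra reduction to finite $S$ (via Remark~\ref{int=finite}) is valid but unnecessary, and the reason you give for needing it is not the right one.
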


\begin{proof}
Suppose not.
Then for each $r\in\mathbb{N}$ there is a $\Delta$-field $K_r$, a finite set $S_r\subset K_r\{X\}$, and $g_r\in\{S_r\}$, such that every element of $S_r\cup\{g_r\}$ is of degree and order bounded by $d$, and
$g_r^r\notin\big(\theta f\mid f\in S_r, \theta\in\Theta, \ord\theta\leq r\big)$.
Let $S\subset\ki$ be the internal set with components $S_r$, and $g\in\ki$ the internal $\Delta$-polynomial with co-ordinates $g_r$.
Then $g\in\{S\}_{\operatorname{int}}$.
Since the order and degrees of the components were bounded, we have $S\subset\k$ and $g\in\k$.
Hence $g\in\{S\}_{\operatorname{int}}\cap\k=\{S\}$ by Corollary~\ref{thm:Nonstandard_Nullstellensatz}.
It follows that for some $f_1,\dots,f_\ell\in S$ and some $t\in\mathbb N$,
$g^t\in\big(\theta f_i\mid1\leq i\leq \ell, \theta\in\Theta, \ord(\theta)\leq t\big)\k$.
So for almost all $r\in\mathbb N$,
$g_r^t\in\big(\theta f_i(r)\mid1\leq i\leq \ell, \theta\in\Theta, \ord(\theta)\leq t\big)K_r\{X\}$.
Since $f_i(r)\in S_r$,  we have $g_r^t\in\big(\theta f\mid f\in S_r, \theta\in\Theta, \ord\theta\leq t\big)$, for almost all $r$.
But taking $r\geq t$, this contradicts the fact that $g_r^r\notin\big(\theta f\mid f\in S_r, \theta\in\Theta, \ord\theta\leq r\big)K_r\{X\}$.
\end{proof}

%\bibliographystyle{plain}
%\bibliography{nsda}

\begin{thebibliography}{1}

\bibitem{rgalg}
O.~Golubitsky, M.~Kondratieva, M.~Moreno~Maza, and A.~Ovchinnikov.
\newblock A bound for {R}osenfeld-{G}roebner algorithm.
\newblock {\em Journal of Symbolic Computation}, 43(8):582--610, 2008.

\bibitem{RittProbAlgorithms}
O.~Golubitsky, M.~Kondratieva, and A.~Ovchinnikov.
\newblock On the generalized {R}itt problem as a computational problem.
\newblock {\em Fundamental\cprime naya i Prikladnaya Matematika},
  14(4):109--120, 2008.

\bibitem{DiffNull}
O.~Golubitsky, M.~Kondratieva, A.~Ovchinnikov, and A.~Szanto.
\newblock A bound for orders in differential {N}ullstellensatz.
\newblock {\em Journal of Algebra}, 322(11):3852--3877, 2009.

\bibitem{hermann26}
G.~Hermann.
\newblock Die {F}rage der endlich vielen {S}chritte in der {T}heorie der
  {P}olynomideale.
\newblock {\em Mathematische Annalen}, 95(1):736--788, 1926.

\bibitem{Kolchin}
E.~R. Kolchin.
\newblock {\em Differential algebra and algebraic groups}.
\newblock Academic Press, New York, 1973.
\newblock Pure and Applied Mathematics, Vol. 54.

\bibitem{seidenberg74}
A.~Seidenberg.
\newblock Constructions in algebra.
\newblock {\em Transactions of the American Mathematical Society},
  197:273--313, 1974.

\bibitem{vandenDries}
L.~van~den Dries and K.~Schmidt.
\newblock Bounds in the theory of polynomial rings over fields. {A} nonstandard
  approach.
\newblock {\em Inventiones Mathematicae}, 76:77--91, 1984.

\bibitem{NonstandardReference}
L.~van~den Dries and A.~J. Wilkie.
\newblock Gromov's theorem on groups of polynomial growth and elementary logic.
\newblock {\em Journal of Algebra}, 89(2):349--374, 1984.

\end{thebibliography}

\def\cprime{$'$}

\end{document}